\documentclass[11pt,reqno]{amsart}

\usepackage[margin=1in]{geometry}
\usepackage{amsmath,amssymb,amsthm,amsfonts,mathtools}
\usepackage{enumitem}
\usepackage{booktabs}
\usepackage{microtype}

\usepackage{thmtools}
\usepackage[colorlinks=true,linkcolor=blue,citecolor=magenta,urlcolor=blue]{hyperref}
\usepackage[capitalize,nameinlink]{cleveref}

\theoremstyle{plain}
\newtheorem{theorem}{Theorem}[section]
\newtheorem{lemma}[theorem]{Lemma}
\newtheorem{proposition}[theorem]{Proposition}

\newtheorem{conjecture}[theorem]{Conjecture}
\theoremstyle{definition}

\theoremstyle{remark}

\numberwithin{equation}{section}

\newcommand{\ZZ}{\mathbb{Z}}
\newcommand{\QQ}{\mathbb{Q}}
\newcommand{\RR}{\mathbb{R}}
\newcommand{\NN}{\mathbb{N}}
\newcommand{\CC}{\mathbb{C}}

\newcommand{\pre}{\mathrm{pre}}

\newcommand{\flip}{\mathrm{flip}}
\newcommand{\supp}{\mathrm{supp}}

\newcommand{\Res}{\mathrm{Res}}

\newcommand{\leftb}{\{\!\!\{}
\newcommand{\rightb}{\}\!\!\}}
\newcommand{\sms}[2]{#1^{(#2)}}
\newcommand{\Part}{\mathcal{P}}

\newcommand{\Singular}{\mathcal{M}}
\newcommand{\eqs}[1]{\overset{#1}{\sim}}

\begin{document}

\title[Elementary Symmetric Partitions and Multiset Recovery]{On the Injectivity of Elementary Symmetric Partitions and the Multiset Recovery Problem}

\author[Z. Sun]{Ziyao Sun} 
\address[Z. Sun]{College of Mathematics and Statistics, Chongqing University, Chongqing 401331, PR China}
\email{sunziyao79@gmail.com}


\begin{abstract}
	The elementary symmetric partition map $\pre_s$, introduced by Ballantine, Beck, and Merca, sends an integer partition to the summands in the evaluation of the $s$-th elementary symmetric polynomial at its parts. By encoding partition parts as prime-exponent valuation vectors, we connect $\pre_s$ to Leo Moser's additive Multiset Recovery Problem (1957) and prove that $\pre_s$ is unconditionally injective on partitions of length $n$ whenever $n$ lies outside the Moser root set $\mathcal{Z}_s$, with no size restrictions. Furthermore, under the equal-size constraint $|\lambda| = |\mu| = N$, we prove that $\pre_4$ is injective at the isolated singular length $n = 12$, and that every fiber of $\pre_3$ on $\Part_6(N)$ has cardinality at most $2$, completely excluding both triplets and quartets.
\end{abstract}

\maketitle

\section{Introduction}
\label{sec:intro}

A fundamental inverse problem at the interface of partition theory and symmetric functions is whether the evaluation of a symmetric polynomial on the parts of an integer partition uniquely determines the underlying partition itself. 

Throughout this paper, let $N$ and $n$ be positive integers. A \emph{partition} $\lambda = (\lambda_1, \dots, \lambda_n)$ of size $|\lambda| = \sum_{i=1}^n \lambda_i = N$ is a weakly decreasing sequence of positive integers. We denote the set of all partitions of size $N$ by $\Part(N)$, and the set of partitions of length $n$ by $\Part_n(N)$. We identify a partition with its multiset of parts $\leftb \lambda_1, \dots, \lambda_n \rightb$, using double-brace notation to distinguish multisets from sets.

For an integer $s \ge 1$, the $s$-th elementary symmetric polynomial in $n$ variables is defined by
\begin{equation*}
	e_s(x_1, \dots, x_n) = \sum_{1 \le i_1 < i_2 < \dots < i_s \le n} x_{i_1} x_{i_2} \cdots x_{i_s}.
\end{equation*}
The study of elementary symmetric partitions was initiated by Ballantine, Beck, and Merca \cite{ballantine2025partitions}. Specifically, they introduced the \emph{elementary symmetric partition map} $\pre_s$, which maps a partition $\lambda = (\lambda_1, \dots, \lambda_n)$ with $n \ge s$ to the partition formed by the $\binom{n}{s}$ summands in the expansion of $e_s(\lambda_1, \dots, \lambda_n)$:
\begin{equation*}
	\pre_s(\lambda) := \leftb \lambda_{i_1} \lambda_{i_2} \cdots \lambda_{i_s} : 1 \le i_1 < i_2 < \dots < i_s \le n \rightb.
\end{equation*}
Subsequently, together with Sagan \cite{ballantine2024elementary}, they extensively investigated the combinatorial properties of these partitions and explored their connections with various partition classes.

Ballantine, Beck, and Merca \cite{ballantine2025partitions} originally posed the following natural conjecture regarding the reconstructive power of this map, which was also highlighted in \cite{ballantine2024elementary}.

\begin{conjecture}[\cite{ballantine2025partitions}]
	\label{conj:bbm}
	For integers $s \ge 2$ and $N \ge 1$, the map $\pre_s$ is injective on the set of partitions of size $N$ with length $n \ge s$.
\end{conjecture}

Li \cite{li2026injectivity} proved the $s = 2$ case, in fact for weakly decreasing sequences of positive real numbers of fixed sum. However, Devnani and Eyyunni \cite{devnani2026elementary} disproved the boundary case $n = s$ by constructing infinite families of counterexamples, proposing in its place a refined conjecture.

\begin{conjecture}[\cite{devnani2026elementary}]
	\label{conj:de}
	For integers $s \ge 2$ and $N \ge 1$, the map $\pre_s$ is injective on the set of partitions of size $N$ with length strictly greater than $s$ ($n > s$).
\end{conjecture}

Very recently, \cref{conj:de} was answered in the negative by Hadelyn, Niergarth, Li, and Li \cite{hadelyn2026counterexamples}. Specifically, Hadelyn et al.\ proved that for every $s \ge 3$, there exist infinitely many pairs of distinct partitions $\lambda, \mu$ of length $2s$ with $$|\lambda|=|\mu|,\qquad \pre_s(\lambda)=\pre_s(\mu).$$ The case $s=3$ was independently obtained by Thomas and Tung \cite{thomas2026injectivitysymmetricpolynomialmaps}. Hadelyn et al.'s construction relied on an algebraic involution $\flip_s: \Part_{2s}(\RR) \to \Part_{2s}(\RR)$ defined by
\begin{equation*}
	\flip_s(\lambda_1, \dots, \lambda_{2s}) := \left( \prod_{i=1}^{2s} \lambda_i \right)^{1/s} \left( \frac{1}{\lambda_{2s}}, \dots, \frac{1}{\lambda_1} \right),
\end{equation*}
which preserves the image of $\pre_s$. This led them to propose the following bold conjecture:

\begin{conjecture}[\cite{hadelyn2026counterexamples}]
	\label{conj:hadelyn}
	Let $\lambda$ and $\mu$ be distinct partitions of length $n > s \ge 2$. Then $\pre_s(\lambda) = \pre_s(\mu)$ if and only if $n = 2s$ and $\mu = \flip_s(\lambda)$.
\end{conjecture}

\cref{conj:hadelyn} claims that for all lengths $n \notin \{s, 2s\}$, $\pre_s$ is injective without any size requirement, and that whenever non-injectivity occurs, the preimage fiber has cardinality at most $2$.

\subsection{The Additive Perspective: Multiset Recovery and Moser Polynomials}

A central thesis of the present paper is that the injectivity of $\pre_s$ is governed by an older, classical problem in additive number theory: the \emph{Multiset Recovery Problem} proposed by Leo Moser in 1957 \cite{moser1957problem}. In its general formulation, the problem asks whether a finite multiset of $n$ numbers $A = \leftb a_1, \dots, a_n \rightb$ in an abelian group can be uniquely reconstructed from the multiset of its $s$-sums:
\begin{equation*}
	\sms{A}{s} := \leftb a_{i_1} + a_{i_2} + \dots + a_{i_s} : 1 \le i_1 < i_2 < \dots < i_s \le n \rightb.
\end{equation*}
Two multisets $A$ and $B$ are called \emph{$s$-equivalent}, denoted $A \eqs{s} B$, if $\sms{A}{s} = \sms{B}{s}$. A pair $(s, n)$ with $n > s$ is called \emph{singular} if there exist distinct multisets $A \ne B$ of size $n$ such that $A \eqs{s} B$. We write $\Singular_s$ for the set of all integers $n > s$ for which $(s, n)$ is singular.

Selfridge and Straus \cite{selfridge1958determination} settled the case $s=2$, proving that $(2,n)$ is singular if and only if $n$ is a power of $2$. For general $s\ge2$, the reconstruction problem can be studied through the power sums of the $s$-fold sums. In particular, these power sums satisfy the following recursive expansion:
\begin{equation}
	\label{eq:intro_moser_recurrence}
	\sigma_k(\sms{A}{s}) = F_{s,k}(n) \sigma_k(A) + \mathcal{P}_k(\sigma_1(A), \dots, \sigma_{k-1}(A)),
\end{equation}
where $\mathcal{P}_k$ is an explicit polynomial, and $F_{s,k}(n)$ is the \emph{Moser polynomial}:
\begin{equation}
	\label{eq:intro_moser_poly}
	F_{s,k}(n) := \sum_{p=1}^s (-1)^{p-1}p^{k-1}\binom{n}{s-p}.
\end{equation}
This power-sum framework underlies the subsequent study of the exceptional cases by Gordon, Fraenkel, and Straus \cite{gordon1962determination}, Ewell \cite{ewell1968determination}, and Fomin and Izhboldin \cite{fomin1994sets}. Whenever $F_{s,k}(n) \ne 0$ for all $1 \le k \le n$, the power sums $\sigma_k(A)$ are determined recursively and uniquely from $\sms{A}{s}$, guaranteeing unique recovery. We define the \emph{Moser root set}:
\begin{equation*}
	\mathcal{Z}_s := \left\{ n \in \NN : n > s \text{ and } \exists k \in \{1, 2, \dots, n\} \text{ such that } F_{s,k}(n) = 0 \right\}.
\end{equation*}
By \eqref{eq:intro_moser_recurrence}, singularity strictly requires the vanishing of at least one Moser polynomial, establishing the containment $\Singular_s \subseteq \mathcal{Z}_s$. 

Decades of research have classified the exact root sets $\mathcal{Z}_s$ for small values of $s$:
\begin{itemize}[leftmargin=2em]
	\item For $s = 2$, $\mathcal{Z}_2 = \{2^m : m \ge 2\} = \{4, 8, 16, 32, \dots\}$ \cite{selfridge1958determination};
	\item For $s = 3$, $\mathcal{Z}_3 = \{6, 27, 486\}$ \cite{selfridge1958determination};
	\item For $s = 4$, $\mathcal{Z}_4 = \{8, 12\}$ \cite{selfridge1958determination};
	\item For $s = 5$, $\mathcal{Z}_5 = \{10\}$ \cite{amdeberhan1997injectivity}.
\end{itemize}
Furthermore, for every $s \ge 2$, $n = 2s$ is always singular: for any asymmetric multiset $A$ with mean $a$, its mirror multiset $\tilde{A} = 2a - A$ satisfies $A \eqs{s} \tilde{A}$.

\medskip
Here an immediate, sharp tension emerges between the two fields. Hadelyn et al.\ conjectured that non-injectivity for elementary symmetric partitions occurs \emph{if and only if} $n = 2s$. However, in the additive setting, $(s, n)$ is singular at other lengths, such as $n = 12$ for $s = 4$, and $n \in \{27, 486\}$ for $s = 3$ \cite{fomin1994sets,boman1996examples,isomurodov2017set}. If the size of the partitions is unconstrained, \cref{conj:hadelyn} fails immediately. This raises the central question of this paper: \emph{Does the equal-size constraint $|\lambda| = |\mu| = N$ eliminate non-injectivity at the non-mirror singular lengths, and does it enforce a universal fiber bound of $2$?}

\subsection{Main Results of This Paper}

By mapping each part $\lambda_i$ of a partition to its prime-exponent valuation vector $\mathbf{v}(\lambda_i) \in \ZZ_{\ge 0}^\infty$ under prime factorization, the multiplicative action of $\pre_s$ on $\Part_n(N)$ transforms isomorphically into the additive $s$-sum recovery of a multiset of vectors in $\ZZ_{\ge 0}^\infty$. 

Our first main result establishes the global domain on which $\pre_s$ is unconditionally injective.

\begin{theorem}
	\label{thm:main_unconditional}
	Let $s \ge 2$ and $n > s$. If $F_{s,k}(n) \ne 0$ for all $k \in \{1, 2, \dots, n\}$ (that is, $n \notin \mathcal{Z}_s$), then the elementary symmetric partition map $\pre_s$ is injective on partitions of length $n$, regardless of whether the partitions have the same size. In particular:
	\begin{enumerate}[label=\textup{(\roman*)}]
		\item For $s = 5$, $\pre_5$ is injective on $\Part_n$ for every $n \in \NN \setminus \{5, 10\}$.
		\item For $s = 4$, $\pre_4$ is injective on $\Part_n$ for every $n \in \NN \setminus \{4, 8, 12\}$.
		\item For $s = 3$, $\pre_3$ is injective on $\Part_n$ for every $n \in \NN \setminus \{3, 6, 27, 486\}$.
	\end{enumerate}
\end{theorem}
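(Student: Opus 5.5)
Our approach is to transfer the problem to the additive setting through prime-exponent valuations and then run Moser's power-sum recursion on vectors rather than scalars. Let $\lambda,\mu$ be partitions of length $n>s$ with $\pre_s(\lambda)=\pre_s(\mu)$, and set $A=\leftb \mathbf{v}(\lambda_1),\dots,\mathbf{v}(\lambda_n)\rightb$ and $B=\leftb \mathbf{v}(\mu_1),\dots,\mathbf{v}(\mu_n)\rightb$, multisets in $\ZZ_{\ge0}^\infty$. By unique factorization, $\mathbf{v}(xy)=\mathbf{v}(x)+\mathbf{v}(y)$, and $\mathbf{v}$ is injective on positive integers. Hence the multiset $\pre_s(\lambda)$ corresponds bijectively to $\sms{A}{s}$, so $\sms{A}{s}=\sms{B}{s}$. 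Conversely, $A=B$ forces $\lambda=\mu$, since both are the multisets of parts. Only finitely many primes divide the parts, so everything lives in $\ZZ^r\subset\QQ^r$ for some $r$. The size $N$ never enters, which is why no equal-size hypothesis is needed.

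Next we reduce vectors to scalars. Choose a linear functional $\phi:\QQ^r\to\QQ$, $\phi(\mathbf{x})=\sum_j c_j x_j$, that is injective on the finite set $A\cup B$. Such a $\phi$ exists because we only need to avoid finitely many hyperplanes $\{c:\langle c,\mathbf{a}-\mathbf{b}\rangle=0\}$ with $\mathbf{a}\ne\mathbf{b}$. Since $\phi$ is additive, $\sms{\phi(A)}{s}=\phi(\sms{A}{s})=\phi(\sms{B}{s})=\sms{\phi(B)}{s}$. This gives two $n$-multisets of rationals that are $s$-equivalent.

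We then invoke the recursion \eqref{eq:intro_moser_recurrence}. Because $F_{s,k}(n)\ne0$ for $1\le k\le n$, induction on $k$ shows that $\sigma_k(\phi(A))=\sigma_k(\phi(B))$ for $k=1,\dots,n$. Over a field of characteristic $0$, Newton's identities turn the first $n$ power sums into the elementary symmetric functions. These determine the polynomial $\prod_i(t-\phi(\mathbf{a}_i))$, so $\phi(A)=\phi(B)$ as multisets. Injectivity of $\phi$ on $A\cup B$ then gives $A=B$, hence $\lambda=\mu$.

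For (i)–(iii) we substitute the root sets $\mathcal{Z}_5=\{10\}$, $\mathcal{Z}_4=\{8,12\}$ and $\mathcal{Z}_3=\{6,27,486\}$ quoted above. The case $n<s$ is excluded by the domain of $\pre_s$. We read "every $n\in\NN\setminus\{\dots\}$" as $n>s$, with the value $n=s$ listed as excluded because of the Devnani–Eyyunni counterexamples.

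The main obstacle is verifying the recursion \eqref{eq:intro_moser_recurrence} with its exact leading coefficient, if it is not simply taken from the literature. To do so, we write $\sum_{|I|=s}(\sum_{i\in I}a_i)^k$ via the multinomial expansion, or via the generating function $\prod_i(1+ye^{a_it})$ taking the $y^s$ coefficient. We then isolate the terms involving only the single power sum $\sigma_k$. Inclusion–exclusion over repeated indices produces the alternating sum $\sum_p(-1)^{p-1}p^{k-1}\binom{n}{s-p}$, and all other terms are polynomials in lower power sums. A small point to check is that the recursion is an identity valid for multisets in any $\QQ$-vector space. The projection $\phi$ makes this automatic, since we apply the recursion only to rational numbers.
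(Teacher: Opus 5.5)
Your proposal is correct and follows essentially the paper's route: embed the parts via prime valuations, project to one dimension by a linear functional injective on $\supp(V_\lambda)\cup\supp(V_\mu)$, use $F_{s,k}(n)\ne 0$ for $1\le k\le n$ to recover the projected multiset, and pull back. The only differences are minor: you pick a generic functional avoiding finitely many hyperplanes where the paper uses the base-$M$ functional $L_M$, and you derive the one-dimensional recovery from the recursion via Newton's identities where the paper cites it as \cref{lem:nonvanishing_implies_injective}.
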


Our second main result resolves the exceptional singular length $(s, n) = (4, 12)$ under the equal-size constraint.

\begin{theorem}
	\label{thm:main_s4}
	Let $\lambda, \mu \in \Part(N)$ be partitions of the same size $N$ having length $n = 12$. If $\pre_4(\lambda) = \pre_4(\mu)$, then $\lambda = \mu$. Consequently, on partitions of equal size, the only length $n > 4$ at which $\pre_4$ fails to be injective is the mirror length $n = 8$.
\end{theorem}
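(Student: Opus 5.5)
We would work multiplicatively and additively at once. Put $a_i=\log\lambda_i$ and $b_i=\log\mu_i$, and set $A=\leftb a_1,\dots,a_{12}\rightb$, $B=\leftb b_1,\dots,b_{12}\rightb$. The hypothesis $\pre_4(\lambda)=\pre_4(\mu)$ says exactly that $A\eqs{4}B$ as multisets of reals. The size constraint says $g_A(1)=g_B(1)$, where $g_X(t):=\sum_{x\in X}e^{tx}$. Assume $A\ne B$; we aim for a contradiction.

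\textbf{Step 1: the power sums.} First evaluate the Moser polynomials at $n=12$:
\[
F_{4,k}(12)=220-66\cdot 2^{k-1}+12\cdot 3^{k-1}-4^{k-1}.
\]
This gives the values $165,120,48,-48,-120$ for $k=1,\dots,5$, and $F_{4,6}(12)=0$. Moreover $F_{4,k}(12)\neq 0$ for $k\ge 7$, because the term $-4^{k-1}$ eventually dominates and a direct check covers the finitely many remaining $k$. By the recursion \eqref{eq:intro_moser_recurrence}:
\begin{itemize}[leftmargin=2em]
\item $\sigma_k(A)=\sigma_k(B)$ for $k\le 5$;
\item $\sigma_6(A)-\sigma_6(B)=:\delta$ is unconstrained;
\item every $\sigma_k$ with $k\ge 7$ is a polynomial in $\sigma_1,\dots,\sigma_5$ and $\sigma_6$, so it is fixed once $\delta$ is fixed.
\end{itemize}
Since two $12$-element multisets with the same $\sigma_1,\dots,\sigma_{12}$ coincide, $A\neq B$ forces $\delta\neq 0$.

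\textbf{Step 2: the generating-function identity.} Next we make this quantitative through the exponential generating function. The $y^4$ coefficient of
\[
\exp\Bigl(\sum_{p\ge1}(-1)^{p-1}\tfrac{y^p}{p}\,g_X(pt)\Bigr)
\]
is the generating function of the $4$-sums, so equality of $4$-sums gives one functional identity in $g_X(t),g_X(2t),g_X(3t),g_X(4t)$. Translate both multisets so their common mean is $0$; this is allowed because $\sigma_1$ agrees. Subtract the identities for $A$ and $B$. The result is a linear-in-$D$ relation for $D(t):=g_A(t)-g_B(t)$, with coefficients built from the common low moments. We would use it to show that $D(t)$ has a rigid form: $D(t)=\delta\,t^6 h(t)$ up to normalization, where $h$ is explicitly expressible from $A$ (equivalently, from the structure of Fomin--Izhboldin type $(4,12)$ pairs).

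\textbf{Step 3: invoke the classification.} Here we would rely on the known description of nontrivial $4$-equivalent $12$-element multisets \cite{fomin1994sets,isomurodov2017set}. It describes $B$ as obtained from $A$ by an explicit transformation of a structured sub-configuration, typically a reflection of a sub-multiset about the common mean together with a compensating rearrangement. In particular it lets us write $g_A(1)-g_B(1)$ as a sum of terms of the form $e^{x}+e^{-x}-2$, or $\cosh$-type differences, weighted with a common sign.

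\textbf{Step 4: the contradiction.} Strict convexity of $t\mapsto e^{t}$, i.e.\ an AM--GM argument on the reflected parts, then shows $g_A(1)-g_B(1)$ has the strict sign of $\delta$. Since $\delta\ne0$, this contradicts $|\lambda|=|\mu|$. The second sentence of the theorem then follows by combining this with \cref{thm:main_unconditional}(ii) and the mirror counterexamples at $n=8$ \cite{hadelyn2026counterexamples}.

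\textbf{Main obstacle.} The main difficulty is Steps 2--3: extracting from $\delta\neq0$ a sign-definite expression for $\sum e^{a_i}-\sum e^{b_i}$. The difficulty is that $(4,12)$ is not a mirror length, so there is no global reflection to exploit. My first attempt would be the classification route of Step 3. If that fails, the fallback is the order-$6$ moment computation: show that $\sum_i(e^{a_i}-e^{b_i})=\sum_{k\ge6}\bigl(\sigma_k(A)-\sigma_k(B)\bigr)/k!$. The right side is a power series in $\delta$ whose higher terms are controlled by the recursion, and we would try to prove that all its terms share the sign of $\delta$.
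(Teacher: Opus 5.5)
\textbf{There is a genuine gap: Steps~2--4 carry the whole proof and are not an argument, and parts of them are wrong.} Your Step~1 is correct: for $1\le k\le 12$, $F_{4,k}(12)$ vanishes only at $k=6$, so $A\ne B$ forces $\delta\ne0$. Your deduction of the second sentence of the theorem also matches the paper.

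The problems in Steps~2--4 are these.
\begin{enumerate}[label=\textup{(\roman*)}]
\item Subtracting the two $e_4$-identities does not give a relation linear in $D$ with coefficients from common low moments. $e_4$ of $g_X(t),\dots,g_X(4t)$ contains $g_X(t)^4$, $g_X(t)^2g_X(2t)$, $g_X(2t)^2$ and $g_X(t)g_X(3t)$, so the difference has coefficients involving all of $g_A$ and $g_B$.
\item ``$D(t)=\delta t^6h(t)$'' only restates that $D$ vanishes to order $6$.
\item Your fallback fails for the same reason. Without structural input, the recursion gives $\sigma_k(A)-\sigma_k(B)$ for $k\ge7$ in terms of $\delta$ and the $4$-sum data, with no control on signs.
\item Step~3 misstates the classification. \cref{lem:isomurodov_kokhas} says the nontrivial pair is unique up to $x\mapsto cx+d$, i.e.\ $\{A,B\}=c\{A_0,B_0\}+d$. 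No reflection of a sub-multiset is involved.
\item The paper applies that uniqueness only to integer multisets, whereas your logarithms lie in the rank-$m$ lattice $\bigoplus_j\ZZ\log p_j\subset\RR$. So you need either a real-coefficient version of the uniqueness or a transfer argument.
\end{enumerate}
The transfer is the missing idea, and the paper supplies it in \cref{lem:collinearity}. Project the exponent vectors by $L_M$ and apply the uniqueness for each large $M$. Then pigeonhole over the finitely many identifications, and use that a polynomial in $M$ vanishing at infinitely many points is zero. This forces all $24$ exponent vectors onto one line, so $\lambda_i=C_0r^{a_i}$ and $\mu_i=C_0r^{b_i}$ with $r\in\QQ_{>0}$. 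Equal size then becomes the single equation $P(r)=0$ of \eqref{eq:P_def}, which the Rational Root Theorem settles.

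Once that reduction is in place, your sign intuition is actually right, and it gives a stronger, real-variable exclusion than \cref{prop:P_no_rational_roots}. Both $A_0-8$ and $B_0-8$ are symmetric about $0$. Writing $r=e^{x}$,
\[
r^{-8}P(r)=2\bigl(2\cosh 7x+1-\cosh 8x-\cosh 5x-\cosh 3x\bigr).
\]
Its Taylor coefficients $2\cdot 7^{2j}-8^{2j}-5^{2j}-3^{2j}$ vanish for $j\le 2$ and are negative for $j\ge3$, since $64^j>2\cdot 49^j$ there. Hence $P(r)<0$ for every real $r>0$ with $r\ne1$. So $g_A(1)-g_B(1)=e^{d}P(e^{c})$ does have the strict sign of $\delta=c^6\bigl(\sigma_6(A_0)-\sigma_6(B_0)\bigr)=-86400\,c^6$.

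That positivity comes from moment cancellation in the explicit pair, not from convexity or AM--GM on reflected parts. It only becomes available after the collinearity step, which your proposal never supplies.
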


The proof of \cref{thm:main_s4} proceeds by two structural steps. First, we establish a simultaneous collinearity lemma: by projecting multidimensional vector multisets into $\ZZ$ via $L_M(\mathbf{x}) = \sum x_k M^{k-1}$ and applying the Pigeonhole Principle across infinitely many $M$, forces the exponent vectors to lie on a common line, generated by a single rational base $r \in \QQ_{>0}$. Second, the equal-size constraint $|\lambda| = |\mu|$ reduces to the univariate polynomial $P(r) = \sum_{a \in A_0} r^a - \sum_{b \in B_0} r^b$, whose leading and constant coefficients are both $-1$; the Rational Root Theorem then forces the only positive rational root to be $r = 1$, which proves $\lambda = \mu$.

Our third main result bounds the preimage fibers of $\pre_3$ at the mirror length $n = 2s = 6$. While pairs of counterexamples exist via $\flip_3$, we prove that the fiber size cannot exceed $2$.

\begin{theorem}
	\label{thm:main_s3_fibers}
	For every integer $N \ge 1$, the elementary symmetric partition map $\pre_3$ restricted to $\Part_6(N)$ has fibers of cardinality at most $2$. That is, there do not exist three distinct partitions $\lambda_1, \lambda_2, \lambda_3 \in \Part_6(N)$ of equal size $N$ such that
	\begin{equation*}
		\pre_3(\lambda_1) = \pre_3(\lambda_2) = \pre_3(\lambda_3).
	\end{equation*}
\end{theorem}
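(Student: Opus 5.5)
We translate to the additive setting and then use the structure of $3$-equivalence classes of $6$-element multisets. Write each $\lambda_j$ as a multiset $A_j$ of six exponent vectors in $\ZZ_{\ge 0}^\infty$, via $\mathbf{v}$. Then $\pre_3(\lambda_1)=\pre_3(\lambda_2)=\pre_3(\lambda_3)$ becomes $A_1 \eqs{3} A_2 \eqs{3} A_3$.

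\textbf{Step 1: projection to $\ZZ$.} Apply the linear projections $L_M$ for large $M$. These are injective on the finitely many vectors involved, so they preserve distinctness of the $A_j$. This reduces the problem to multisets of integers, or reals, of size $6$.

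\textbf{Step 2: every equivalent multiset is the mirror.} For $(s,n)=(3,6)$, the Moser recurrence \eqref{eq:intro_moser_recurrence} gives $\sigma_k(A)$ for all $k$ except those with $F_{3,k}(6)=0$. Compute
\[
F_{3,k}(6)=\binom{6}{2}-2^{k-1}\cdot 6+3^{k-1}=15-6\cdot 2^{k-1}+3^{k-1}.
\]
This vanishes only at $k=3$ ($15-24+9=0$), since $k=1,2,4,5,6$ give $10,6,6,18,66$. Hence $\sigma_1,\sigma_2$ are determined, $\sigma_3$ is free, and $\sigma_4,\sigma_5,\sigma_6$ are polynomials in $\sigma_1,\sigma_2,\sigma_3$ and the data. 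So a $3$-equivalence class of $6$-multisets is cut out by one free parameter $t=\sigma_3$. The higher identities for $k=4,5,6$ are linear in $\sigma_k$ but polynomial in $t$.

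I claim these force $t$ to satisfy a quadratic whose two roots correspond to $A$ and its mirror $2a-A$, where $a=\sigma_1/6$. Indeed $\sigma_3(2a-A)$ is an affine function of $\sigma_3(A)$ once $\sigma_1,\sigma_2$ are fixed, and it has reflection symmetry about the value attained by symmetric multisets. The cleanest route is to show that the class of $A$ over $\RR$ is exactly $\{A,\,2a-A\}$. Then $A_1,A_2,A_3$ cannot be three distinct multisets, and the theorem follows.

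To establish the claim, I would center $A$ so that $\sigma_1=0$ and write the $k=4$ identity with the $\sigma_3$-dependence made explicit. I expect it to give $\sigma_4=c+\alpha\sigma_3^2$, or at least to pin down $\sigma_3^2$ through the $k=4$ or $k=6$ equation. That yields $\sigma_3=\pm\tau$, and sign reversal is exactly the mirror under centering. With $\sigma_1,\dots,\sigma_6$ determined up to this sign, Newton's identities fix $A$ up to $A\mapsto -A$.

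\textbf{Step 3: the lift back.} If Step 2 works as stated, the statement is obtained directly: over $\ZZ$ each class has at most two elements, and the projections preserve distinctness. The equal-size hypothesis is not even needed. One also checks that mirroring commutes with $L_M$, which holds since the map is linear.

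\textbf{Main obstacle.} The crux is the computation showing that the dependence of $\sigma_4$ (or $\sigma_6$) on the free parameter $\sigma_3$ is through $\sigma_3^2$ with a nonzero coefficient. This is the step I expect to be hardest, and I would verify it by direct symbolic expansion. If instead the dependence degenerates, the class could be a curve, and then I would fall back on the paper's collinearity lemma together with the size constraint. In that route, the exponent vectors lie on a line $r^{\ZZ}$ and the equal-size condition becomes a polynomial in $r$ to which the Rational Root Theorem applies. One would then need to rule out three distinct lattice points on the curve satisfying this condition.
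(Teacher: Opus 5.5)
\textbf{The gap is in Step 2.} Your central claim there is false: a $3$-equivalence class of $6$-multisets is not always $\{A,\,2a-A\}$. Take
\[
X=\leftb -7,-3,0,1,2,7\rightb, \qquad Y=\leftb -8,-3,1,2,3,5\rightb,
\]
which is Ewell's two-parameter family with $(a,b)=(2,1)$. Both are centered, and a direct enumeration shows they have the same $20$ three-sums, namely $\leftb -10,-9,-8,-6,-5,-4,-3,-2,-1,0,0,1,2,3,4,5,6,8,9,10\rightb$. Yet $Y\neq \pm X$, so the class contains the four distinct multisets $X,-X,Y,-Y$. Taking parts $2^{x+8}$ gives four distinct partitions of length $6$ with the same $\pre_3$-image, of sizes $34594$, $35266$, $11809$ and $67816$. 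So the remark that "the equal-size hypothesis is not even needed" is wrong, and any correct proof must use $|\lambda|=N$ essentially.

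The power-sum bookkeeping behind your expectation also fails at three points:
\begin{itemize}
\item You have $F_{3,5}(6)=15-96+81=0$, not $18$. So $\sigma_5$ is free as well as $\sigma_3$. In fact $F_{s,k}(2s)=0$ for every odd $3\le k\le 2s$.
\item In the $k=4$ identity, $\sigma_3$ enters only through $\sigma_1\sigma_3$, which vanishes after centering. Check: $\sigma_4(X)=\sigma_4(Y)=4900$ while $\sigma_3(X)=-18$ and $\sigma_3(Y)=-378$. So no $\sigma_3^2$ term pins down $\sigma_3=\pm\tau$.
\item Agreement of $\sigma_k$ of the $3$-sums for $k\le 6$ is only a necessary condition. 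Equality of the $20$-element multisets involves $k$ up to $20$, so counting free parameters among the first six identities cannot determine the size of a class.
\end{itemize}

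\textbf{Your fallback does not close the gap.} The exceptional classes are not curves but finite quartets $\{X,-X,Y,-Y\}$ of two kinds. For the two-parameter family the exponent vectors are only coplanar, spanned by two vectors $\mathbf{a},\mathbf{b}$, not collinear. So the collinearity lemma and a univariate Rational Root argument do not apply there.

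The missing ingredients, which are exactly what the paper supplies, are:
\begin{enumerate}
\item Ewell's classification, combined with the fact that $\flip_3$ pairs at most two of the three partitions. This forces the three centered exponent multisets to be three members of one quartet.
\item A simultaneous reduction via $L_M$ and pigeonhole over $M$. This gives collinearity for the one-parameter family and coplanarity for the two-parameter family.
\item The equal-size conditions written as polynomial systems, then shown to have no nontrivial positive solutions:
\begin{itemize}
\item in the one-parameter family, two univariate polynomials in $r$ whose gcd is $(r-1)^3(r+1)$;
\item in the two-parameter family, a bivariate system $E_1=E_2^*=0$ in $(u,v)$ with no positive solution satisfying $u\neq v$ and $uv\neq 1$.
\end{itemize}
\end{enumerate}
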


To prove \cref{thm:main_s3_fibers}, we analyze the two four-member equivalence classes (quartets) classified by Ewell \cite{ewell1968determination}. For the single-parameter family, we eliminate non-trivial solutions using polynomial greatest common divisors over $\QQ[r]$. For the two-parameter family, we prove a simultaneous coplanarity lemma and eliminate the resulting bivariate Laurent system using Cylindrical Algebraic Decomposition (CAD). 

Finally, in Section~\ref{sec:conjectures}, we analyze the remaining singular lengths $n \in \{27, 486\}$ for $s = 3$. We show that every known counterexample family in the literature collapses under the equal-size constraint, and we formulate refined conjectures on the injectivity of elementary symmetric partitions.

\subsection{Organization of the Paper}

The remainder of this paper is organized as follows. In \cref{sec:bridge}, we formalize the prime-exponent vector embedding and prove \cref{thm:main_unconditional}. In \cref{sec:s4_proof}, we prove \cref{thm:main_s4}, establishing the simultaneous collinearity reduction and the rational root elimination for $(s, n) = (4, 12)$. In \cref{sec:s3_quartets}, we analyze Ewell's quartet classification, prove the rigidity of triplets, and establish \cref{thm:main_s3_fibers}. In \cref{sec:conjectures}, we verify the historical counterexample families at $n \in \{27, 486\}$ and propose refined conjectures for equal-size partitions. In Appendix~\ref{sec:appendix_code}, we provide the complete symbolic verification scripts used in the proofs.

\section{The Prime-Exponent Embedding and Moser Polynomials}
\label{sec:bridge}

In this section, we establish the formal bridge between the multiplicative action of $\pre_s$ and the additive multiset recovery problem. Throughout, we consider partitions $\lambda = (\lambda_1, \dots, \lambda_n)$ and $\mu = (\mu_1, \dots, \mu_n)$ of fixed length $n > s \ge 2$, with no size restrictions.

\subsection{The Monoid Isomorphism}

Let $\mathbb{P} = \{p_1, p_2, p_3, \dots\} = \{2, 3, 5, \dots\}$ be the sequence of prime numbers in increasing order. By the Fundamental Theorem of Arithmetic, every positive integer factors uniquely into prime powers. 

Consider the free abelian monoid $(\ZZ_{\ge 0}^\infty, +)$ of sequences of non-negative integers with finite support. The prime valuation map is defined by
\begin{equation*}
	\mathbf{v}: \NN \to \ZZ_{\ge 0}^\infty, \qquad x = \prod_{j=1}^\infty p_j^{v_j(x)} \longmapsto \mathbf{v}(x) = (v_1(x), v_2(x), \dots).
\end{equation*}
Because prime factorization is unique, $\mathbf{v}$ is an isomorphism of abelian monoids:
\begin{equation*}
	\mathbf{v}(x y) = \mathbf{v}(x) + \mathbf{v}(y) \qquad (\forall x, y \in \NN).
\end{equation*}

For any partition $\lambda = (\lambda_1, \dots, \lambda_n)$, we define its exponent vector multiset by
\begin{equation*}
	V_\lambda := \leftb \mathbf{v}(\lambda_1), \mathbf{v}(\lambda_2), \dots, \mathbf{v}(\lambda_n) \rightb \subset \ZZ_{\ge 0}^\infty.
\end{equation*}
Because $\mathbf{v}$ is bijective, $\lambda$ is uniquely determined by the multiset $V_\lambda$, which immediately yields the following characterization.

\begin{lemma}
	\label{lem:multiplicative_to_additive}
	Let $s \ge 2$ and $n \ge s$. For any two partitions $\lambda, \mu$ of length $n$,
	\begin{equation*}
		\pre_s(\lambda) = \pre_s(\mu) \iff \sms{V_\lambda}{s} = \sms{V_\mu}{s},
	\end{equation*}
	where $\sms{V_\lambda}{s}$ is the multiset of $s$-sums of $V_\lambda$ in $\ZZ_{\ge 0}^\infty$.
\end{lemma}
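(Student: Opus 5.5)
The plan is to show that the map $\mathbf{v}$, applied entrywise, carries the multiset $\pre_s(\lambda)$ bijectively onto the multiset $\sms{V_\lambda}{s}$, index by index. The key observation is that both multisets are indexed by the same set of $s$-subsets $\{i_1<\dots<i_s\}\subseteq\{1,\dots,n\}$. By the monoid homomorphism property, extended by induction to $s$ factors,
\begin{equation*}
\mathbf{v}(\lambda_{i_1}\cdots\lambda_{i_s})=\mathbf{v}(\lambda_{i_1})+\cdots+\mathbf{v}(\lambda_{i_s}).
\end{equation*}
Hence, as multisets,
\begin{equation*}
\mathbf{v}\bigl(\pre_s(\lambda)\bigr)=\sms{V_\lambda}{s}.
\end{equation*}
Here $\mathbf{v}$ applied to a multiset means pushing forward the multiplicities. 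The same identity holds for $\mu$.

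For the forward direction, suppose $\pre_s(\lambda)=\pre_s(\mu)$ as multisets. Then their images under any function coincide, counted with multiplicity. So $\sms{V_\lambda}{s}=\mathbf{v}(\pre_s(\lambda))=\mathbf{v}(\pre_s(\mu))=\sms{V_\mu}{s}$.

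For the reverse direction, I use that $\mathbf{v}:\NN\to\ZZ_{\ge0}^\infty$ is a bijection by the Fundamental Theorem of Arithmetic. Its inverse $\mathbf{v}^{-1}(\mathbf{e})=\prod_j p_j^{e_j}$ is therefore a well-defined function on finitely supported vectors. Applying $\mathbf{v}^{-1}$ to both sides of $\sms{V_\lambda}{s}=\sms{V_\mu}{s}$ gives $\pre_s(\lambda)=\pre_s(\mu)$.

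There is no real obstacle here. The only point needing a line of care is that pushing a multiset forward along an injective map preserves multiplicities exactly. Concretely, for each value $m$, the multiplicity of $m$ in $\pre_s(\lambda)$ equals the multiplicity of $\mathbf{v}(m)$ in $\sms{V_\lambda}{s}$. This holds because distinct $s$-subsets whose products equal $m$ are exactly those whose vector sums equal $\mathbf{v}(m)$. Stating this multiplicity count explicitly gives both directions at once.
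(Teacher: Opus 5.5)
Your proposal is correct and takes the same approach as the paper. The paper justifies the lemma in one line by noting that $\mathbf{v}$ is a bijective monoid isomorphism. You spell out the step it leaves implicit: pushing a multiset forward along the injective homomorphism $\mathbf{v}$ preserves multiplicities exactly, since $\mathbf{v}$ sends each product $\lambda_{i_1}\cdots\lambda_{i_s}$ to the corresponding $s$-sum.
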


\subsection{Moser Polynomials and the Recovery Criterion}

The monoid $\ZZ_{\ge 0}^\infty$ embeds into the free abelian group $\ZZ^\infty$, which is torsion-free. A classical theorem of Gordon, Fraenkel, and Straus ensures that multiset recovery over torsion-free abelian groups reduces to the integers.

\begin{lemma}[\cite{gordon1962determination}]
	\label{lem:transfer}
	Let $s \ge 2$ and $n > s$. The multiset recovery problem of $s$-sums over any torsion-free abelian group is equivalent to the recovery problem over $\ZZ$. 
\end{lemma}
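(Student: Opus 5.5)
We need to prove Lemma \ref{lem:transfer}: the $s$-sum recovery problem over any torsion-free abelian group $G$ is equivalent to the one over $\ZZ$. Concretely, we show that $(s,n)$ admits distinct $s$-equivalent multisets of size $n$ in $G$ if and only if it does in $\ZZ$. One direction is immediate. If $A\ne B$ are $s$-equivalent multisets in $\ZZ$, pick any nonzero $g\in G$. The map $m\mapsto mg$ is an injective homomorphism, because $G$ is torsion-free. It therefore carries $A$ and $B$ to distinct multisets $Ag\neq Bg$, and since homomorphisms commute with $s$-sums, $\sms{(Ag)}{s}=\sms{(Bg)}{s}$. (If $G=0$ there is nothing to prove.)

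For the converse, suppose $A=\leftb a_1,\dots,a_n\rightb$ and $B=\leftb b_1,\dots,b_n\rightb$ are distinct and $s$-equivalent in $G$. First we reduce to a lattice. Let $H$ be the subgroup generated by the $2n$ elements $a_i,b_j$. As a finitely generated torsion-free abelian group, $H\cong\ZZ^d$ for some $d$. Next we project to $\ZZ$ by a linear functional that separates a finite set. Consider the finite set of nonzero differences
\[
D=\{x-y : x,y\in \textstyle\bigcup_{i}\{a_i,b_i\}\cup \sms{A}{s}\cup\sms{B}{s},\ x\ne y\}\subset \ZZ^d\setminus\{0\}.
\]
We choose an integer vector $c\in\ZZ^d$ with $\langle c,\delta\rangle\ne0$ for every $\delta\in D$. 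This is possible because each condition excludes only a hyperplane, and finitely many hyperplanes cannot cover $\ZZ^d$. Alternatively, in the style of the paper, take $c=(1,M,M^2,\dots,M^{d-1})$ with $M$ larger than twice the maximal absolute coordinate appearing in $D$.

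The functional $\varphi=\langle c,\cdot\rangle:\ZZ^d\to\ZZ$ is a homomorphism, so $\sms{\varphi(A)}{s}=\varphi(\sms{A}{s})=\varphi(\sms{B}{s})=\sms{\varphi(B)}{s}$. Moreover, $\varphi$ is injective on the finite set of all elements of $A\cup B$, so it preserves multiplicities. Hence $\varphi(A)\ne\varphi(B)$ as multisets, and these form a singular pair in $\ZZ$. It follows that the singular sets $\Singular_s$ coincide over $G$ and over $\ZZ$. Equivalently, unique recovery holds in $G$ if and only if it holds in $\ZZ$.

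The only genuinely delicate points are, first, that torsion-freeness is used twice: to embed $\ZZ$ into $G$ and to identify $H$ with $\ZZ^d$, which needs the structure theorem for finitely generated abelian groups. Second, the separating functional must be chosen to avoid finitely many hyperplanes. For the purposes of this paper, only the converse direction with $G=\ZZ^\infty$ is really needed. In that case $H$ sits inside some $\ZZ^d$ directly, and the structure theorem is unnecessary.
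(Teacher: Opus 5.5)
Your proof is correct. The paper gives no proof of this lemma; it simply quotes Gordon--Fraenkel--Straus. Your converse direction, however, is exactly the base-$M$ projection $L_M$ that the paper uses inline in the proof of \cref{thm:main_unconditional} and in \cref{lem:collinearity}. The structure theorem and the embedding $m\mapsto mg$ then carry the argument from that case to a general torsion-free group. One small correction: for $G=0$ there is not ``nothing to prove.'' The direction $\ZZ\Rightarrow G$ is actually false there, since the trivial group has only one $n$-multiset while $\Singular_s\neq\emptyset$ over $\ZZ$. So the lemma must be read for nontrivial $G$.
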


Let $A = \leftb a_1, \dots, a_n \rightb \subset \ZZ$ be a multiset of integers. For each integer $k \ge 1$, we define the power sums of $A$ and of its $s$-sums $\sms{A}{s}$ by
\begin{equation*}
	\sigma_k(A) := \sum_{i=1}^n a_i^k, \qquad \sigma_k(\sms{A}{s}) := \sum_{1 \le i_1 < \dots < i_s \le n} (a_{i_1} + \dots + a_{i_s})^k.
\end{equation*}

\begin{lemma}[\cite{gordon1962determination}]
	\label{lem:moser_inversion}
	For all positive integers $s, n, k$ with $n \ge s$,
	\begin{equation}
		\label{eq:moser_recurrence}
		\sigma_k(\sms{A}{s}) = F_{s,k}(n) \sigma_k(A) + \mathcal{P}_k(\sigma_1(A), \dots, \sigma_{k-1}(A)),
	\end{equation}
	where $\mathcal{P}_k$ is a polynomial with integer coefficients independent of $A$, and $F_{s,k}(n)$ is the Moser polynomial defined in \eqref{eq:intro_moser_poly}.
\end{lemma}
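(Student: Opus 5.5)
The statement to prove is \cref{lem:moser_inversion}: the $k$-th power sum of the $s$-sums equals $F_{s,k}(n)\sigma_k(A)$ plus a universal integer polynomial in the lower power sums. I would proceed by expanding with the multinomial theorem and then re-expressing the resulting monomial symmetric sums through power sums. Concretely, write
\begin{equation*}
	\sigma_k(\sms{A}{s}) = \sum_{|I|=s} \Bigl(\sum_{i\in I} a_i\Bigr)^k = \sum_{\substack{\alpha \in \ZZ_{\ge 0}^n \\ |\alpha| = k}} \binom{k}{\alpha}\, \#\{I : |I|=s,\ \supp(\alpha)\subseteq I\}\, a^\alpha .
\end{equation*}
The count equals $\binom{n-|\supp\alpha|}{s-|\supp\alpha|}$. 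Grouping the monomials by support size $p$ gives
\begin{equation*}
	\sigma_k(\sms{A}{s}) = \sum_{p\ge1}\binom{n-p}{s-p}\sum_{\nu\vdash k,\ \ell(\nu)=p}\binom{k}{\nu}\, m_\nu(A),
\end{equation*}
where $m_\nu$ is the augmented monomial sum over distinct indices with no symmetry factor. This is a universal integer polynomial identity in $n$ and the $a_i$.

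Next, I would express each $m_\nu$ through power sums by Möbius inversion over set partitions of the $p$ index slots:
\begin{equation*}
	m_\nu=\sum_{\pi}\mu(\hat0,\pi)\prod_{B\in\pi}\sigma_{\nu_B}(A), \qquad \nu_B = \sum_{j\in B}\nu_j ,
\end{equation*}
with integer coefficients. The only term that involves $\sigma_k$ itself, rather than products of $\sigma_j$ with $j<k$, is the one-block partition $\pi=\hat1$, whose coefficient is $\mu(\hat0,\hat1)=(-1)^{p-1}(p-1)!$. Every other term is a product of at least two power sums of positive indices summing to $k$, so each index is strictly below $k$. These terms make up $\mathcal{P}_k$, which has integer coefficients. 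One caveat is that they depend on $n$ through the binomial factors. I would read ``independent of $A$'' as permitting this dependence, since $n$ is fixed; alternatively, $n=\sigma_0(A)$ can be absorbed into the polynomial.

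It then remains to identify the coefficient of $\sigma_k(A)$:
\begin{equation*}
	C=\sum_{p}\binom{n-p}{s-p}(-1)^{p-1}(p-1)!\sum_{\nu\vdash k,\ \ell(\nu)=p}\binom{k}{\nu}.
\end{equation*}
The inner sum over compositions counts surjections $[k]\to[p]$, which is $p!\,S(k,p)$. This step is where I expect the main obstacle: the resulting expression
\begin{equation*}
	C=\sum_p(-1)^{p-1}(p-1)!\,p!\,S(k,p)\binom{n-p}{s-p}
\end{equation*}
must be matched with $F_{s,k}(n)=\sum_p(-1)^{p-1}p^{k-1}\binom{n}{s-p}$. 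The two sums are organized differently, so the matching needs care.

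Rather than force a direct binomial manipulation, I would compute $C$ by a specialization trick. Take $A$ to consist of a formal $x$ together with $n-1$ zeros; then every $\sigma_j(A)=x^j$. This specialization does not isolate $C$, however. The cleaner route is a generating-function argument in the variable $t$. Note that $\sum_{|I|=s}e^{t\sum_I a_i}$ is the coefficient of $y^s$ in $\prod_i(1+ye^{ta_i})$, and
\begin{equation*}
	\log\prod_i(1+ye^{ta_i})=\sum_{p\ge1}\frac{(-1)^{p-1}y^p}{p}\sum_i e^{pta_i} .
\end{equation*}
The contribution linear in the power sums comes from a single $\log$-term times $\exp$ of the constant part $n\log(1+y)$. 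That constant part gives $(1+y)^n$, and the linear term gives $\frac{(-1)^{p-1}}{p}\,p^k\frac{t^k}{k!}\sigma_k$. Extracting $y^s\,t^k/k!$ from
\begin{equation*}
	(1+y)^n\sum_p\frac{(-1)^{p-1}}{p}y^p p^k\sigma_k
\end{equation*}
yields exactly $\sum_p(-1)^{p-1}p^{k-1}\binom{n}{s-p}=F_{s,k}(n)$. All remaining terms are products of two or more $\sigma_j$ with $j<k$, together with powers of $n$, with rational coefficients.

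Given this, I would actually adopt the generating-function argument as the main proof, since it bypasses the Stirling computation above. The remaining check is integrality of $\mathcal{P}_k$. Since the left side and $F_{s,k}(n)\sigma_k$ are integers, $\mathcal{P}_k$ is integer-valued, and the earlier Möbius expansion shows that its coefficients are integers in any case.
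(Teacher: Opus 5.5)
The paper gives no proof of this lemma; it only cites Gordon--Fraenkel--Straus. So the only question is whether your argument stands on its own, and its core does. Write $\prod_i(1+ye^{ta_i})=(1+y)^n\exp\bigl(\sum_{j\ge1}\frac{t^j}{j!}\sigma_j(A)G_j(y)\bigr)$ with $G_j(y)=\sum_{p\ge1}(-1)^{p-1}p^{j-1}y^p$. The only $t^k$-contribution involving $\sigma_k$ is $\frac{t^k}{k!}\sigma_k G_k(y)$, and $[y^s](1+y)^nG_k(y)=F_{s,k}(n)$. Every other term is a product of at least two $\sigma_j$ with $j\le k-1$, with coefficients depending only on $n,s,k$. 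This is essentially the standard derivation (Newton's identity for $e_s$ evaluated at $e^{ta_1},\dots,e^{ta_n}$). Your reading of ``independent of $A$'' is also the one the paper needs.

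Two steps need repair. First, your multinomial/M\"obius expansion is mis-normalized. If $m_\nu$ sums over ordered tuples of distinct indices and $\nu$ runs over compositions of $k$ into $p$ parts, each monomial $a^\alpha$ with $|\supp\alpha|=p$ is counted $p!$ times. The correct coefficient is therefore $C=\sum_p(-1)^{p-1}(p-1)!\,S(k,p)\binom{n-p}{s-p}$. Your version with the extra $p!$ is simply false: for $s=k=2$ it gives $n-3$ instead of $F_{2,2}(n)=n-2$, which is why it would not match.

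Second, your integrality argument does not work as written. Integer-valuedness of $\mathcal{P}_k$ does not imply integer coefficients (compare $x(x-1)/2$), and the ``earlier M\"obius expansion'' you invoke is the mis-normalized one. You do not need it, because your own generating function gives integrality directly. By the exponential formula, $k!\,[t^k]\exp\bigl(\sum_{j}\frac{t^j}{j!}x_j\bigr)=\sum_{\pi}\prod_{B\in\pi}x_{|B|}$, summed over set partitions $\pi$ of $\{1,\dots,k\}$. Hence
\begin{equation*}
\sigma_k(\sms{A}{s})=[y^s]\,(1+y)^n\sum_{\pi}\prod_{B\in\pi}\sigma_{|B|}(A)\,G_{|B|}(y).
\end{equation*}
The one-block partition gives the $F_{s,k}(n)\sigma_k$ term. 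Since $(1+y)^n$ and every $G_j$ have integer coefficients, all coefficients of $\mathcal{P}_k$ are integers for each fixed $n$. With this replacement, and with the first route either deleted or corrected, the proof is complete.
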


Selfridge and Straus established that non-vanishing of Moser polynomials guarantees unique recovery.

\begin{lemma}[\cite{selfridge1958determination}]
	\label{lem:nonvanishing_implies_injective}
	Let $s \ge 2$ and $n > s$. If $F_{s,k}(n) \ne 0$ for all $k \in \{1, 2, \dots, n\}$ (that is, $n \notin \mathcal{Z}_s$), then an $n$-multiset $A \subset \ZZ$ is uniquely determined by its multiset of $s$-sums $\sms{A}{s}$.
\end{lemma}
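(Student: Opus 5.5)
The statement at hand is \cref{lem:nonvanishing_implies_injective}. I would prove it by the classical power-sum inversion, using \cref{lem:moser_inversion} as the engine. Let $A, B \subset \ZZ$ be $n$-multisets with $\sms{A}{s} = \sms{B}{s}$. Then $\sigma_k(\sms{A}{s}) = \sigma_k(\sms{B}{s})$ for every $k \ge 1$, since these are power sums of the same multiset. The goal is to show $\sigma_k(A) = \sigma_k(B)$ for all $1 \le k \le n$. Once that is done, Newton's identities over $\QQ$ give $e_k(A) = e_k(B)$ for $1 \le k \le n$. The monic polynomials $\prod_{a\in A}(x-a)$ and $\prod_{b\in B}(x-b)$ then coincide, and $A = B$ as multisets because they are the multisets of roots of the same polynomial.

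The key step is an induction on $k$. Assume $\sigma_j(A) = \sigma_j(B)$ for all $j < k$; for $k=1$ this is vacuous. Apply \eqref{eq:moser_recurrence} to both $A$ and $B$ and subtract. The polynomial $\mathcal{P}_k$ is independent of the multiset, so the terms $\mathcal{P}_k(\sigma_1,\dots,\sigma_{k-1})$ agree by the induction hypothesis. This leaves
\begin{equation*}
F_{s,k}(n)\bigl(\sigma_k(A) - \sigma_k(B)\bigr) = 0 .
\end{equation*}
Since $F_{s,k}(n) \ne 0$ for $k \le n$ by hypothesis, we get $\sigma_k(A) = \sigma_k(B)$. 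This holds for $k = 1, \dots, n$, which is exactly the range Newton's identities need.

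There is essentially no obstacle, given \cref{lem:moser_inversion}. The one point needing care is the final step, from $\sigma_1,\dots,\sigma_n$ to the multiset. This requires working over a field of characteristic zero so that Newton's identities can be inverted; that is, dividing by $k \le n$ is legitimate. Since $A, B \subset \ZZ \subset \QQ$, this is fine. If one instead wanted to verify \cref{lem:moser_inversion} itself, the work would be the expansion
\begin{equation*}
\sum_{i_1<\dots<i_s}(a_{i_1}+\dots+a_{i_s})^k
\end{equation*}
via the multinomial theorem. One would isolate the coefficient of the pure power sum $\sigma_k(A)$ by inclusion–exclusion over coincident indices, which produces the alternating sum $\sum_p (-1)^{p-1}p^{k-1}\binom{n}{s-p}$. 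Here I take that lemma as given.
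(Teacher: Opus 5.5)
Your proposal is correct and follows the paper's approach. The paper gives no separate proof, citing Selfridge--Straus and noting in the introduction that when $F_{s,k}(n)\neq 0$ for $1\le k\le n$, the power sums $\sigma_k(A)$ are recovered recursively from $\sms{A}{s}$ via \eqref{eq:moser_recurrence}; your induction on $k$ followed by Newton's identities over $\QQ$ makes that argument explicit.
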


\subsection{Proof of \cref{thm:main_unconditional}}

We now prove \cref{thm:main_unconditional} using a single generic projection.

\begin{proof}[Proof of \cref{thm:main_unconditional}]
	Let $s \ge 2$ and $n > s$. Assume $n \notin \mathcal{Z}_s$, so that $F_{s,k}(n) \ne 0$ for all $k \in \{1, \dots, n\}$.
	
	Let $\lambda$ and $\mu$ be partitions of length $n$ satisfying $\pre_s(\lambda) = \pre_s(\mu)$. By \cref{lem:multiplicative_to_additive}, this equality is equivalent to $\sms{V_\lambda}{s} = \sms{V_\mu}{s}$ in $\ZZ_{\ge 0}^\infty$. Let $\{p_1, \dots, p_m\}$ be the finite set of prime factors dividing the parts of $\lambda$ and $\mu$. Then the exponent vectors in $V_\lambda$ and $V_\mu$ are supported in $\ZZ_{\ge 0}^m$.
	
	Choose an integer $M > s \cdot \max_{i, j} \{v_j(\lambda_i), v_j(\mu_i)\}$. Consider the linear functional
	\begin{equation*}
		L_M: \ZZ^m \to \ZZ, \qquad L_M(\mathbf{x}) := \sum_{k=1}^m x_k M^{k-1}.
	\end{equation*}
	By uniqueness of base-$M$ positional representations, $L_M$ is injective on the finite support $\supp(V_\lambda) \cup \supp(V_\mu)$.
	
	Because $L_M$ is a $\ZZ$-module homomorphism, it commutes with the formation of $s$-sums:
	\begin{equation*}
		\sms{(L_M(V_\lambda))}{s} = L_M(\sms{V_\lambda}{s}) = L_M(\sms{V_\mu}{s}) = \sms{(L_M(V_\mu))}{s}.
	\end{equation*}
	Thus, $L_M(V_\lambda)$ and $L_M(V_\mu)$ are two $n$-multisets of integers with identical $s$-sums in $\ZZ$.
	
	Since $n \notin \mathcal{Z}_s$, \cref{lem:nonvanishing_implies_injective} applies directly to the one-dimensional recovery problem over $\ZZ$. This yields the multiset equality of integers:
	\begin{equation*}
		L_M(V_\lambda) = L_M(V_\mu).
	\end{equation*}
	Because $L_M$ is injective on $\supp(V_\lambda) \cup \supp(V_\mu)$, the multiset equality of their images implies the exact multiset equality of their preimages in $\ZZ_{\ge 0}^m$:
	\begin{equation*}
		V_\lambda = V_\mu.
	\end{equation*}
	
	By bijectivity of the valuation map $\mathbf{v}$, $V_\lambda = V_\mu$ implies that $\lambda$ and $\mu$ have identical multisets of parts. Since partitions are ordered sequences of their parts, $\lambda = \mu$. This deduction holds unconditionally, without any restriction on partition sizes $|\lambda|$ and $|\mu|$.
	
	It remains to specialize to $s = 5, 4, 3$ using the known classifications of the integer roots of Moser polynomials:
	\begin{enumerate}[label=\textup{(\roman*)}]
		\item \textbf{Case $s = 5$}: Amdeberhan and Zeleke \cite{amdeberhan1997injectivity} proved that $F_{5,k}(n) = 0$ has integer roots if and only if $n \in \{2, 3, 4, 5, 10\}$. For $n > 5$, the only root is $n = 10$. Thus $\mathcal{Z}_5 = \{10\}$, and $\pre_5$ is injective for all $n \in \NN \setminus \{5, 10\}$.
		
		\item \textbf{Case $s = 4$}: Selfridge and Straus \cite{selfridge1958determination} proved that $F_{4,k}(n) = 0$ has integer roots if and only if $n \in \{1,2,3,4, 8, 12\}$. Thus $\mathcal{Z}_4 = \{8, 12\}$, and $\pre_4$ is injective for all $n \in \NN \setminus \{4, 8, 12\}$.
			
		\item \textbf{Case $s = 3$}: Selfridge and Straus \cite{selfridge1958determination} proved that $F_{3,k}(n) = 0$ has integer roots if and only if $n \in \{1,2,3, 6, 27, 486\}$. Thus $\mathcal{Z}_3 = \{6, 27, 486\}$, and $\pre_3$ is injective for all $n \in \NN \setminus \{3, 6, 27, 486\}$.
	\end{enumerate}
	Because $\Singular_s \subseteq \mathcal{Z}_s$, the proof is complete.
\end{proof}

\section{Exclusion of the Singular Length $(s, n) = (4, 12)$}
\label{sec:s4_proof}

By \cref{thm:main_unconditional}(ii), the only lengths $n > 4$ that can admit non-injective pairs for $\pre_4$ are $n \in \Singular_4 \subseteq \{8, 12\}$. The length $n = 8 = 2s$ admits infinite counterexample families of equal size via $\flip_4$ \cite{hadelyn2026counterexamples}. The length $n = 12$, however, is an isolated root of the Moser polynomial $F_{4,6}(12) = 0$. In this section, we prove \cref{thm:main_s4}, showing that the equal-size constraint $|\lambda| = |\mu| = N$ completely excludes non-trivial counterexamples at $n = 12$.

\subsection{Background: The Unique Additive Counterexample for $(s, n) = (4, 12)$}

In 1968, Ewell \cite{ewell1968determination} claimed that 12-multisets are uniquely determined by their 4-sums. In 2017, Isomurodov and Kokhas \cite{isomurodov2017set} corrected an error in Ewell's polynomial coefficients and discovered that 4-sum recovery does fail at $(4, 12)$. By computing the Gröbner basis of the associated polynomial ideal, they established the following uniqueness theorem.

\begin{lemma}[\cite{isomurodov2017set}]
	\label{lem:isomurodov_kokhas}
	There exist two distinct 12-multisets of integers with identical multisets of 4-sums. Furthermore, this counterexample pair is unique up to affine transformations $x \mapsto c x + d$ with $c \ne 0$.
\end{lemma}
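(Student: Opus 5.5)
The plan is to treat the two halves of \cref{lem:isomurodov_kokhas} separately: existence by exhibiting an explicit pair, and uniqueness through the power-sum framework of \cref{lem:moser_inversion}, finished by an exact elimination computation.

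\emph{Existence.} I would write down the explicit pair $A \ne B$ of $12$-multisets of integers and check $\sms{A}{4} = \sms{B}{4}$ directly. Each side has $\binom{12}{4} = 495$ sums, so this is a finite symbolic check. A cleaner equivalent check uses generating functions. Put $f_A(x) = \sum_{a \in A} x^a$. By Newton's identities, the generating function of $\sms{A}{4}$ is
\begin{equation*}
	\tfrac{1}{24}\bigl(f_A(x)^4 - 6 f_A(x)^2 f_A(x^2) + 3 f_A(x^2)^2 + 8 f_A(x) f_A(x^3) - 6 f_A(x^4)\bigr),
\end{equation*}
and it suffices to verify that this Laurent polynomial identity holds when $f_A$ is replaced by $f_B$.

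\emph{Reduction of uniqueness to a moment system.} First I would record the vanishing pattern at $n = 12$:
\begin{equation*}
	F_{4,k}(12) = 220 - 66\cdot 2^{k-1} + 12\cdot 3^{k-1} - 4^{k-1},
\end{equation*}
which vanishes at $k = 6$ and at no other $k \le 12$. Suppose $A \eqs{4} B$. Running the recurrence \eqref{eq:moser_recurrence} upward gives $\sigma_k(A) = \sigma_k(B)$ for $k = 1, \dots, 5$. At $k = 6$ the recurrence carries no information, so $\delta := \sigma_6(A) - \sigma_6(B)$ is a free parameter, and $\delta \neq 0$ whenever $A \ne B$.

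Next I would normalize. By \cref{lem:transfer} we may work over $\QQ$, and an affine map $x \mapsto cx + d$ lets us impose $\sigma_1 = 0$ and fix one scale, for instance a chosen value of $\sigma_2$ or of $\delta$. For $k = 7, \dots, 12$ the recurrence, together with the equalities $\sigma_k(\sms{A}{4}) = \sigma_k(\sms{B}{4})$, expresses $\sigma_k(A) - \sigma_k(B)$ as an explicit polynomial in $\delta$ and the common moments $\sigma_2, \dots, \sigma_5$. By Newton's identities, $\sigma_1, \dots, \sigma_{12}$ of a $12$-multiset determine its elementary symmetric functions, hence the multiset. Therefore $B$ is determined by $A$, and the condition that $B$ exist is a finite system of polynomial equations in the elementary symmetric functions of $A$.

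\emph{The main obstacle.} Solving this system is the hard step. The variables are $e_2(A), \dots, e_{12}(A)$ together with $e_j(B)$, modulo the affine normalization. The constraints are the full equalities $\sigma_k(\sms{A}{4}) = \sigma_k(\sms{B}{4})$ for all $k \le \binom{12}{4}$. These reduce to finitely many equations, since $495$ power sums determine the $4$-sum multiset, and in practice the first several beyond $k = 12$ should already cut the variety down.

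I would first try the generating-function form. Write $g = f_A - f_B$. The identity above says that
\begin{equation*}
	g \cdot \bigl(\text{expression in } f_A, f_B \text{ at } x, x^2, x^3, x^4\bigr) = 0,
\end{equation*}
and the vanishing of $\sigma_1, \dots, \sigma_5$ of the difference says $(x-1)^6 \mid g$. Following Moser and Selfridge--Straus, I would extract the factor structure and try to pin $g$ down to a one-parameter shape. If that stalls, I would fall back on computing a Gröbner basis of the normalized ideal, as Isomurodov and Kokhas did, and show that it has exactly one rational point up to the symmetry $A \leftrightarrow B$. That point yields the claimed pair, and undoing the normalization shows every counterexample is an affine image $x \mapsto cx + d$ of it. The risk is that the elimination is too large to carry out or certify by hand, so the argument would ultimately rest on a computer-algebra verification.
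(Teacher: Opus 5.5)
The paper does not prove this lemma. It cites Isomurodov--Kokhas, describing their uniqueness result as a Gr\"obner-basis computation for the associated ideal, and it records the explicit pair $A_0,B_0$. Your outline is the same kind of argument: $\sigma_1,\dots,\sigma_5$ are forced equal because $F_{4,k}(12)\neq 0$ for $k\le 5$, there is a single free parameter $\delta$ at the root $F_{4,6}(12)=0$, and then an elimination finishes. So the route is not the problem. The problem is that, as written, neither half is established.

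\emph{Existence.} This needs an actual pair, and you never give one. No elimination is required here. The paper's pair translated by $-8$, namely $\leftb 0,0,\pm1,\pm2,\pm4,\pm7,\pm7\rightb$ and $\leftb \pm1,\pm2,\pm3,\pm4,\pm5,\pm8\rightb$, can be checked directly with your $e_4$ generating-function identity.

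\emph{Uniqueness.} This rests entirely on an elimination you do not perform, just as the paper rests on the citation. Beyond that, two steps of your plan would fail as stated.
\begin{enumerate}[label=\textup{(\arabic*)}]
\item The $e_4$ identity for $f_A,f_B$ does not factor as $g\cdot(\cdots)=0$. The terms involving $f(x^2)$, $f(x^3)$, $f(x^4)$ produce $g(x^2)$, $g(x^3)$, $g(x^4)$, which are not multiples of $g(x)$. What you actually get is a relation
\begin{equation*}
g(x)H_1+g(x^2)H_2+g(x^3)H_3-6\,g(x^4)=0.
\end{equation*}
Writing $g=(x-1)^6\psi$, the lowest-order comparison at $x=1$ (the Selfridge--Straus move) yields $F_{4,6}(12)\,\psi(1)=0$. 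This holds automatically and says nothing about the shape of $g$.
\item Fixing the scale by prescribing $\sigma_2$ or $\delta$ destroys rationality. For the centered pair above, $\sigma_2=238$ and $\delta=-86400$, and these scale by $c^2$ and $c^6$ respectively. Setting either one to $\pm1$ therefore needs an irrational (or non-real) $c$. A search for rational points of the normalized ideal would find none and would wrongly suggest non-existence. Either keep the system weighted-homogeneous, or compute all complex points of the normalized system and then test which are scalar multiples of rational configurations.
\end{enumerate}

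A smaller slip: $\sigma_{12}(A)-\sigma_{12}(B)$ involves $\sigma_6(A)$ through the $\sigma_6^2$ term. So $B$ is determined by $A$ and $\delta$, not by $\delta$ and $\sigma_2,\dots,\sigma_5$ alone.
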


After translating the canonical pair, we may take the pair of 4-equivalent 12-multisets $A_0, B_0 \subset \ZZ_{\ge 0}$ is given explicitly by
\begin{equation*}
	A_0 = \leftb 1, 1, 4, 6, 7, 8, 8, 9, 10, 12, 15, 15 \rightb,
\end{equation*}
\begin{equation*}
	B_0 = \leftb 0, 3, 4, 5, 6, 7, 9, 10, 11, 12, 13, 16 \rightb.
\end{equation*}
Both sets have size $12$, and $\sms{A_0}{4} = \sms{B_0}{4}$.

By \cref{lem:multiplicative_to_additive}, choosing an arbitrary base $r \ge 2$, the partitions $\lambda = (r^a : a \in A_0)$ and $\mu = ( r^b : b \in B_0) $ satisfy $\pre_4(\lambda) = \pre_4(\mu)$. Thus, counterexamples exist if partition sizes are unconstrained. To establish injectivity on $\Part_{12}(N)$, we must show that neither multidimensional vector configurations nor this one-dimensional canonical pair can satisfy $|\lambda| = |\mu| = N$.

\subsection{Elimination of Multi-Prime Counterexamples: Simultaneous Collinearity}

We first show that any potential multidimensional counterexample pair $(V, U)$ must simultaneously collapse to a single common line.

\begin{lemma}
	\label{lem:collinearity}
	Let $V = \leftb \mathbf{v}_1, \dots, \mathbf{v}_{12} \rightb$ and $U = \leftb \mathbf{u}_1, \dots, \mathbf{u}_{12} \rightb$ be multisets of vectors in $\ZZ_{\ge 0}^m$ satisfying $\sms{V}{4} = \sms{U}{4}$. Then the collection of $24$ vectors $V \cup U$ is collinear. 
	
	More precisely, there exist an offset vector $\mathbf{v}_0 \in \ZZ^m$, a non-zero direction vector $\mathbf{w} \in \ZZ^m$, and permutations $\sigma, \tau \in S_{12}$ such that (up to relabeling $V$ and $U$):
	\begin{equation}
		\label{eq:simultaneous_collinear_rep}
		\mathbf{v}_{\sigma(i)} = \mathbf{v}_0 + a_i \mathbf{w}, \qquad \mathbf{u}_{\tau(i)} = \mathbf{v}_0 + b_i \mathbf{w} \qquad (1 \le i \le 12),
	\end{equation}
	where $a_i$ and $b_i$ are the integer.
\end{lemma}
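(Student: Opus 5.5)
The plan is to reduce the vector problem to the one-dimensional uniqueness statement of \cref{lem:isomurodov_kokhas} through the projections $L_M$ from the proof of \cref{thm:main_unconditional}, and then use infinitely many values of $M$ together with the pigeonhole principle. Throughout we assume $V \ne U$, since the statement is only meaningful (and only needed) for a non-trivial pair; if $V=U$ there is nothing to prove in the application.

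\emph{Step 1 (projection).} Let $K$ bound all coordinates of vectors in $V \cup U$. For every integer $M > 4K$, the functional $L_M(\mathbf{x}) = \sum_k x_k M^{k-1}$ is injective on $\supp(V)\cup\supp(U)$ and commutes with $4$-sums. Hence $L_M(V)$ and $L_M(U)$ are two $12$-multisets of integers with equal $4$-sums. Since $V\ne U$ and $L_M$ is injective on the relevant supports, $L_M(V)\ne L_M(U)$. By \cref{lem:isomurodov_kokhas}, there are $c_M\ne 0$ and $d_M$ such that $\{L_M(V),L_M(U)\}=\{c_MA_0+d_M,\;c_MB_0+d_M\}$, possibly with the roles of $A_0$ and $B_0$ swapped. (If $c_M,d_M$ are only rational, nothing below changes.)

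\emph{Step 2 (pigeonhole).} For each such $M$, record three pieces of finite data:
\begin{itemize}
\item whether $L_M(V)$ corresponds to $A_0$ or to $B_0$;
\item a bijection $\sigma_M$ matching the indexed vectors of $V$ with the listed elements of the first target multiset;
\item a bijection $\tau_M$ doing the same for $U$ and the second target multiset.
\end{itemize}
For instance, in the unswapped case we require $L_M(\mathbf{v}_{\sigma_M(i)})=c_Ma_i+d_M$ and $L_M(\mathbf{u}_{\tau_M(i)})=c_Mb_i+d_M$. There are only finitely many such choices, so some fixed choice $(\sigma,\tau,\text{swap})$ occurs for an infinite set $\mathcal{M}$ of values of $M$. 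Relabel so that the unswapped case holds; the swapped case is symmetric.

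\emph{Step 3 (polynomial identity in $M$).} Put $\mathbf{d}_i=\mathbf{v}_{\sigma(i)}-\mathbf{v}_{\sigma(1)}$ and $\mathbf{e}_i=\mathbf{u}_{\tau(i)}-\mathbf{v}_{\sigma(1)}$. Then for $M\in\mathcal{M}$ we have $L_M(\mathbf{d}_i)=c_M(a_i-a_1)$ and $L_M(\mathbf{e}_i)=c_M(b_i-a_1)$. Fix an index $j$ with $a_j\ne a_1$, which exists because $A_0$ is not constant. Eliminating $c_M$ gives
\begin{equation*}
L_M\bigl((a_j-a_1)\mathbf{d}_i-(a_i-a_1)\mathbf{d}_j\bigr)=0 \qquad (M\in\mathcal{M}),
\end{equation*}
and similarly with $\mathbf{e}_i$ and $b_i$ in place of $\mathbf{d}_i$ and $a_i$. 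The left side is a polynomial in $M$ with integer coefficients (the coordinates of the vector) that vanishes at infinitely many integers, so the vector itself is $0$.

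\emph{Step 4 (conclusion).} Step 3 shows that every $\mathbf{d}_i$ and every $\mathbf{e}_i$ is a rational multiple of $\mathbf{d}_j$, and $\mathbf{d}_j\ne 0$ because $L_M(\mathbf{d}_j)=c_M(a_j-a_1)\ne 0$. Let $\mathbf{w}$ be the primitive integer vector in the direction of $\mathbf{d}_j$. Every integer vector that is a rational multiple of a primitive vector is an integer multiple of it, so $\mathbf{d}_i=a_i'\mathbf{w}$ and $\mathbf{e}_i=b_i'\mathbf{w}$ with $a_i',b_i'\in\ZZ$. Taking $\mathbf{v}_0=\mathbf{v}_{\sigma(1)}$ yields \eqref{eq:simultaneous_collinear_rep}, and $V\cup U$ lies on one line.

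\emph{Main obstacle.} The delicate point is Step 2. The affine parameters $c_M,d_M$ depend on $M$, so we must ensure the combinatorial matching does not. This works because we pigeonhole only on the finite labeling data and then eliminate $c_M$ and $d_M$ by taking differences and ratios. We also have to handle the possible $A_0\leftrightarrow B_0$ swap and justify $L_M(V)\ne L_M(U)$, which uses the injectivity of $L_M$ on the supports. A by-product of the argument, useful later, is that $(a_i')$ and $(b_i')$ are proportional to $(a_i-a_1)$ and $(b_i-a_1)$, so $V,U$ are affine images of $A_0,B_0$ along $\mathbf{w}$.
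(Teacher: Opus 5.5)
Your argument is essentially the paper's proof: you project by $L_M$, invoke the Isomurodov--Kokhas uniqueness, and pigeonhole on the finite labelling data $\{0,1\}\times S_{12}\times S_{12}$. You then eliminate $d_M$ by differences and $c_M$ by cross-multiplication, and conclude because a polynomial in $M$ vanishes at infinitely many points. Your explicit assumption $V\ne U$, which the lemma tacitly needs, is a welcome addition. The only divergence is the normalization in Step 4: the paper takes $\mathbf{w}=\mathbf{v}_{\sigma(5)}-\mathbf{v}_{\sigma(4)}$, using the unit gap $7-6$ in $A_0$, so the coefficients are exactly the entries of $A_0,B_0$ as used later for $P(r)$; your primitive $\mathbf{w}$ gives coefficients $\kappa(a_i-a_1)$ and $\kappa(b_i-a_1)$, and that same unit gap forces $\kappa\in\ZZ$, so replacing $\mathbf{w}$ by $\kappa\mathbf{w}$ and $\mathbf{v}_0$ by $\mathbf{v}_{\sigma(1)}-a_1\kappa\mathbf{w}$ recovers the paper's form (and keeps the base $r^{\kappa}$ rational downstream).
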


\begin{proof}
	For an integer $M \in \NN$, consider the linear functional $L_M(\mathbf{x}) := \sum_{k=1}^m x_k M^{k-1}$. Because $L_M$ is a module homomorphism, it commutes with 4-sums:
	\begin{equation*}
		\sms{(L_M(V))}{4} = L_M(\sms{V}{4}) = L_M(\sms{U}{4}) = \sms{(L_M(U))}{4}.
	\end{equation*}
	Choose $M_0 > 4 \max_{i,k} \{v_{i,k}, u_{i,k}\}$. For every integer $M \ge M_0$, $L_M$ is injective on $\supp(V) \cup \supp(U)$. Thus, $L_M(V)$ and $L_M(U)$ form a pair of distinct 4-equivalent 12-multisets of integers.
	
	By \cref{lem:isomurodov_kokhas}, the counterexample pair is unique up to affine transformations. Thus, the unordered pair $\{L_M(V), L_M(U)\}$ must coincide with $c_M \{A_0, B_0\} + d_M$ for some scaling $c_M \ne 0$ and shift $d_M \in \QQ$.
	
	We consider the finite state space $\mathcal{S} := \{0, 1\} \times S_{12} \times S_{12}$. For each $M \ge M_0$, there exists a triple $(\varepsilon_M, \sigma_M, \tau_M) \in \mathcal{S}$ specifying whether $L_M(V)$ maps to $A_0$ or $B_0$, alongside the internal permutations of $V$ and $U$. Because $|\mathcal{S}| = 2 \times (12!)^2 < \infty$, by the Pigeonhole Principle, a single configuration triple $(\varepsilon, \sigma, \tau)$ holds for an infinite subset of integers $\mathcal{M}_\infty \subset \NN_{\ge M_0}$. 
	
	Without loss of generality, assume $\varepsilon = 0$. Thus, for each $M \in \mathcal{M}_\infty$, there exist a non-zero scaling factor $c_M \in \QQ \setminus \{0\}$ and a shift $d_M \in \QQ$ such that
	\begin{equation*}
		L_M(\mathbf{v}_{\sigma(i)}) = c_M a_i + d_M, \qquad L_M(\mathbf{u}_{\tau(i)}) = c_M b_i + d_M \qquad (1 \le i \le 12)
	\end{equation*}
	hold simultaneously across all $24$ vectors with the parameter pair $(c_M, d_M)$ shared between $V$ and $U$.
	
	Since $A_0$ is non-trivial, choose a reference pair $(p, q)$ in $A_0$ with $\Delta := a_p - a_q \ne 0$. Subtracting the $q$-th relation of $V$ from the $p$-th relation eliminates $d_M$:
	\begin{equation*}
		L_M(\mathbf{v}_{\sigma(p)} - \mathbf{v}_{\sigma(q)}) = c_M \Delta \ne 0.
	\end{equation*}
	We now eliminate $c_M$ across all remaining vectors via cross-multiplication:
	\begin{enumerate}[label=\textup{(\roman*)}]
		\item For each $i \in \{1, \dots, 12\}$, subtracting the reference element $\mathbf{v}_{\sigma(q)}$ and cross-multiplying yields
		\begin{equation*}
			\Delta \cdot L_M(\mathbf{v}_{\sigma(i)} - \mathbf{v}_{\sigma(q)}) - (a_i - a_q) \cdot L_M(\mathbf{v}_{\sigma(p)} - \mathbf{v}_{\sigma(q)}) = 0.
		\end{equation*}
		Define $\mathbf{W}_{V, i} := \Delta (\mathbf{v}_{\sigma(i)} - \mathbf{v}_{\sigma(q)}) - (a_i - a_q) (\mathbf{v}_{\sigma(p)} - \mathbf{v}_{\sigma(q)}) \in \QQ^m$. The evaluation $L_M(\mathbf{W}_{V, i}) = 0$ is a polynomial in $M$ of degree at most $m-1$ that vanishes at infinitely many points $M \in \mathcal{M}_\infty$. Thus, $\mathbf{W}_{V, i} = \mathbf{0} \in \QQ^m$.
		
		\item Similarly, for each $i \in \{1, \dots, 12\}$ in $U$, subtracting $\mathbf{v}_{\sigma(q)}$ and cross-multiplying gives
		\begin{equation*}
			\Delta \cdot L_M(\mathbf{u}_{\tau(i)} - \mathbf{v}_{\sigma(q)}) - (b_i - a_q) \cdot L_M(\mathbf{v}_{\sigma(p)} - \mathbf{v}_{\sigma(q)}) = 0.
		\end{equation*}
		The same polynomial vanishing argument forces $\mathbf{W}_{U, i} = \mathbf{0} \in \QQ^m$.
	\end{enumerate}
	
	Notice that $A_0$ contains adjacent elements $a_5 = 7$ and $a_4 = 6$, so that $a_5 - a_4 = 1$. Specializing $(p, q) = (5, 4)$ and setting the integer direction vector $\mathbf{w} := \mathbf{v}_{\sigma(5)} - \mathbf{v}_{\sigma(4)} \in \ZZ^m$, we obtain
	\begin{equation*}
		\mathbf{v}_{\sigma(i)} - \mathbf{v}_{\sigma(4)} = (a_i - 6) \mathbf{w}, \qquad \mathbf{u}_{\tau(i)} - \mathbf{v}_{\sigma(4)} = (b_i - 6) \mathbf{w} \qquad (1 \le i \le 12).
	\end{equation*}
	Setting the integer base offset $\mathbf{v}_0 := \mathbf{v}_{\sigma(4)} - 6 \mathbf{w} \in \ZZ^m$, this yields $\mathbf{v}_{\sigma(i)} = \mathbf{v}_0 + a_i \mathbf{w}$ and $\mathbf{u}_{\tau(i)} = \mathbf{v}_0 + b_i \mathbf{w}$, establishing \eqref{eq:simultaneous_collinear_rep}.
\end{proof}

\subsection{The Size Polynomial and Exclusion of Rational Roots}

By \cref{lem:collinearity}, there exist a positive rational prefactor $C_0 = \prod_{j=1}^m p_j^{(\mathbf{v}_0)_j} > 0$ and a positive rational base $r := \prod_{j=1}^m p_j^{w_j} \in \QQ_{>0}$ such that
\begin{equation*}
	\lambda_{\sigma(i)} = C_0 \cdot r^{a_i}, \qquad \mu_{\tau(i)} = C_0 \cdot r^{b_i} \qquad (1 \le i \le 12),
\end{equation*}
where $a_i \in A_0$ and $b_i \in B_0$. Since components of $\mathbf{w} \in \ZZ^m$ may be negative, $r$ is a rational number.

The equal-size condition $|\lambda| = |\mu| = N$ requires $\sum_{a \in A_0} r^a - \sum_{b \in B_0} r^b = 0$. The powers $r^4, r^6, r^7, r^9, r^{10}, r^{12}$ appear in both multisets and cancel identically. This leaves the polynomial equation:
\begin{equation}
	\label{eq:P_def}
	P(r) := -r^{16} + 2r^{15} - r^{13} - r^{11} + 2r^8 - r^5 - r^3 + 2r - 1 = 0.
\end{equation}

\begin{proposition}
	\label{prop:P_no_rational_roots}
	The polynomial $P(r)$ defined in \eqref{eq:P_def} has no roots in $\QQ_{>0} \setminus \{1\}$.
\end{proposition}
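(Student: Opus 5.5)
The approach is a direct application of the Rational Root Theorem to $P$. The point is that $P$ has integer coefficients with leading coefficient and constant term both equal to $-1$, so its only possible rational roots are $\pm 1$.

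First, I will double-check that \eqref{eq:P_def} is correct. Expanding
\begin{equation*}
\sum_{a\in A_0} r^a - \sum_{b \in B_0} r^b
\end{equation*}
using the explicit multisets $A_0$ and $B_0$, the terms $r^4, r^6, r^7, r^9, r^{10}, r^{12}$ cancel. What remains is $2r + 2r^8 + 2r^{15}$ from $A_0$, minus $1 + r^3 + r^5 + r^{11} + r^{13} + r^{16}$ from $B_0$. This is exactly $P(r)$, so $P \in \ZZ[r]$ has degree $16$, leading coefficient $-1$, and constant term $-1$.

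Next, I apply the Rational Root Theorem. Suppose $r = p/q$ is a root in lowest terms with $q > 0$. Then $p$ divides the constant term $-1$ and $q$ divides the leading coefficient $-1$. Hence $r \in \{1, -1\}$, and the only positive rational candidate is $r = 1$. This proves that $P$ has no roots in $\QQ_{>0} \setminus \{1\}$.

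For completeness, I note that $r=1$ is indeed a root: $P(1) = -1+2-1-1+2-1-1+2-1 = 0$. This is consistent with $|A_0| = |B_0| = 12$.

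I expect no real obstacle here. The only care needed is the coefficient bookkeeping when cancelling the common powers, since an error there would change the extreme coefficients on which the argument rests. The downstream consequence then follows from \cref{lem:collinearity}. If $r = 1$, then $\mathbf{w}$ must be $\mathbf{0}$, because $r = \prod_j p_j^{w_j} = 1$ forces $\mathbf{w} = \mathbf{0}$ by unique factorization. But $\mathbf{w} = \mathbf{0}$ makes all parts equal, so $\lambda = \mu$.
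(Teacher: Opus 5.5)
Your proof is correct and follows the paper's argument: $P$ has integer coefficients with leading coefficient and constant term both $-1$, so by the Rational Root Theorem its only rational candidates are $\pm 1$, leaving $r=1$ as the only positive one. Your re-derivation of $P$ from $A_0$ and $B_0$ (the cancellation of the common powers) and your check that $P(1)=0$ are both accurate; the paper leaves these steps implicit.
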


\begin{proof}
	The polynomial $P(r) \in \ZZ[r]$ has leading coefficient $c_{16} = -1$ and constant term $c_0 = -1$. By the Rational Root Theorem, any rational root $r = p/q$ in lowest terms must satisfy $p \mid (-1)$ and $q \mid (-1)$. Thus, $p \in \{\pm 1\}$ and $q = 1$, restricting all possible rational roots to $r \in \{1, -1\}$.
	
	Because $r = \prod p_j^{w_j} > 0$, the negative root $r = -1$ is excluded. The only positive rational root is $r = 1$.
\end{proof}

\subsection{Proof of \cref{thm:main_s4}}

\begin{proof}[Proof of \cref{thm:main_s4}]
	Let $\lambda, \mu \in \Part_{12}(N)$ satisfy $\pre_4(\lambda) = \pre_4(\mu)$. By \cref{lem:collinearity}, their exponent vectors are collinear, forcing $\lambda_i = C_0 r^{a_i}$ and $\mu_i = C_0 r^{b_i}$ with $r \in \QQ_{>0}$. 
	
	The equal-size condition $|\lambda| = |\mu|$ requires $P(r) = 0$. By \cref{prop:P_no_rational_roots}, the only positive rational root is $r = 1$. When $r = 1$, all parts are identical: $\lambda_i = \mu_i = C_0 = N/12$, which forces $\lambda = \mu$.
	
	By \cref{thm:main_unconditional}(ii), the only lengths $n > 4$ supporting non-injective pairs for $\pre_4$ are $n \in \Singular_4 \subseteq \{8, 12\}$. Having excluded $n = 12$ on partitions of equal size, the only remaining non-injective length on $\Part(N)$ is the mirror length $n = 8 = 2s$, which admits infinite counterexample families via $\flip_4$ \cite{hadelyn2026counterexamples}.
\end{proof}

\section{Bounding the Pre-image Fibers for $(s, n) = (3, 6)$}
\label{sec:s3_quartets}

At length $n = 2s = 6$, $\pre_3$ admits infinite families of non-injective pairs of partitions $(\lambda, \flip_3(\lambda))$ of equal size $|\lambda| = |\flip_3(\lambda)| = N$ \cite{hadelyn2026counterexamples}. In the additive setting, preimage fibers of 3-sums at $n = 6$ can achieve cardinality 4 (quartets) \cite{ewell1968determination}. In this section, we prove that under the equal-size constraint, no fiber can contain three or more partitions, establishing \cref{thm:main_s3_fibers}.

\subsection{The Two Algebraic Invariants and the $\flip_3$ Involution}

Suppose for contradiction that there exist three distinct partitions $\lambda_1, \lambda_2, \lambda_3 \in \Part_6(N)$ satisfying $\pre_3(\lambda_1) = \pre_3(\lambda_2) = \pre_3(\lambda_3) = \nu$. Let $V_1, V_2, V_3 \subset \ZZ_{\ge 0}^m$ be their respective prime-exponent vector multisets. By \cref{lem:multiplicative_to_additive}, $\sms{V_1}{3} = \sms{V_2}{3} = \sms{V_3}{3}$.

\begin{lemma}
	\label{lem:s3_invariants}
	Let $\lambda, \mu \in \Part_6$ satisfy $\pre_3(\lambda) = \pre_3(\mu)$. Then:
	\begin{enumerate}[label=\textup{(\roman*)}]
		\item \textbf{Total Product Invariance}: $\prod_{i=1}^6 \lambda_i = \prod_{i=1}^6 \mu_i =: \Pi$. Thus, their exponent multisets share the exact same center vector $\mathbf{v}_{\mathrm{mean}} := \frac{1}{6} \sum_{\mathbf{v} \in V_\lambda} \mathbf{v} \in \QQ^m$.
		
		\item \textbf{Centered Second-Moment Invariance}: The centered second power sums coincide under any linear functional $\pi$: $\sigma_2(\pi(V_\lambda) - \pi(\mathbf{v}_{\mathrm{mean}})) = \sigma_2(\pi(V_\mu) - \pi(\mathbf{v}_{\mathrm{mean}}))$. Consequently, no relative scaling $c \ne \pm 1$ can exist between centered exponent multisets.
		
		\item \textbf{Realization of $\flip_3$}: In the centered exponent space, negating centered vectors $\mathbf{v}' \mapsto -\mathbf{v}'$ corresponds to the involution $\flip_3$:
		\begin{equation*}
			\flip_3(\lambda)_i = \Pi^{1/3} \frac{1}{\lambda_{7-i}}.
		\end{equation*}
	\end{enumerate}
\end{lemma}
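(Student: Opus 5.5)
The plan is to obtain all three parts from the monoid isomorphism $\mathbf{v}$ of \cref{lem:multiplicative_to_additive}, together with the power-sum identity of \cref{lem:moser_inversion} for $k=1,2$. Throughout, $V_\lambda,V_\mu\subset\ZZ_{\ge0}^m$ satisfy $\sms{V_\lambda}{3}=\sms{V_\mu}{3}$.

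For (i), I would argue multiplicatively. Each index $i\in\{1,\dots,6\}$ lies in exactly $\binom{5}{2}=10$ of the $3$-subsets of $\{1,\dots,6\}$. Hence the product of all $20$ parts of $\pre_3(\lambda)$ equals $\Pi_\lambda^{10}$, where $\Pi_\lambda=\prod_i\lambda_i$. Since $\pre_3(\lambda)=\pre_3(\mu)$, we get $\Pi_\lambda^{10}=\Pi_\mu^{10}$. Both products are positive integers, so $\Pi_\lambda=\Pi_\mu=\Pi$. Applying $\mathbf{v}$ gives $\sum_{\mathbf{v}\in V_\lambda}\mathbf{v}=\mathbf{v}(\Pi)=\sum_{\mathbf{u}\in V_\mu}\mathbf{u}$, so the two means agree, with $\mathbf{v}_{\mathrm{mean}}=\tfrac16\mathbf{v}(\Pi)$.

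For (ii), fix a linear functional $\pi:\QQ^m\to\QQ$. Because $\pi$ is additive, it commutes with forming $3$-sums, so $\pi(V_\lambda)$ and $\pi(V_\mu)$ are $3$-equivalent $6$-multisets of rationals. I would then compute the first two power sums directly (or read them off from \cref{lem:moser_inversion}) for a $6$-multiset $A$:
\begin{equation*}
\sigma_1(\sms{A}{3})=10\,\sigma_1(A),\qquad \sigma_2(\sms{A}{3})=10\,\sigma_2(A)+8\sum_{i<j}a_ia_j=6\,\sigma_2(A)+4\,\sigma_1(A)^2 .
\end{equation*}
The coefficient $6$ is $F_{3,2}(6)=15-12+3\neq0$. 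So $\sigma_1(A)$ is determined by the $3$-sums, and then $\sigma_2(A)$ is as well. The centered second moment $\sigma_2(A)-\sigma_1(A)^2/6$ therefore agrees for $A=\pi(V_\lambda)$ and $A=\pi(V_\mu)$. This is precisely the quantity $\sigma_2(\pi(V)-\pi(\mathbf{v}_{\mathrm{mean}}))$, and by (i) the centers coincide.

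For the scaling consequence, suppose the centered multisets satisfy $\pi(V_\mu)-\pi(\mathbf{v}_{\mathrm{mean}})=c\,\bigl(\pi(V_\lambda)-\pi(\mathbf{v}_{\mathrm{mean}})\bigr)$. Then the moment identity gives $c^2S=S$, where $S$ is the common centered second moment. If $S\neq0$ this forces $c=\pm1$. If $S=0$, all centered values are $0$, which is the trivial constant configuration. The one point needing care is this nondegeneracy: to avoid $S=0$ we must choose $\pi$ not constant on $V_\lambda$, e.g. $\pi=L_M$ for large $M$ as in \cref{lem:collinearity}.

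For (iii), negating a centered vector sends $\mathbf{v}$ to $2\mathbf{v}_{\mathrm{mean}}-\mathbf{v}$. I would first check that this map preserves $3$-sums when $n=6$. For a triple $T$ of indices, the sum of $2\mathbf{v}_{\mathrm{mean}}-\mathbf{v}_i$ over $i\in T$ is $6\mathbf{v}_{\mathrm{mean}}-\sum_T\mathbf{v}_i$, which is the sum over the complementary triple $T^c$. Since $T\mapsto T^c$ is a bijection on $3$-subsets, the multiset of $3$-sums is unchanged. Translating back through $\mathbf{v}$, we have $2\mathbf{v}_{\mathrm{mean}}=\tfrac13\mathbf{v}(\Pi)=\mathbf{v}(\Pi^{1/3})$, interpreted formally in $\QQ^m$. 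So the image part is $\Pi^{1/3}/\lambda_i$. Because $x\mapsto 1/x$ reverses order, the weakly decreasing rearrangement has $i$-th entry $\Pi^{1/3}/\lambda_{7-i}$, which is exactly $\flip_3$ with $s=3$. No step here is a serious obstacle; the only subtle point is the nondegeneracy remark in (ii).
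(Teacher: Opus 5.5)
Your proposal is correct and follows the paper's proof essentially step for step: the $\binom{5}{2}=10$ incidence count for the product invariant, the nonvanishing $F_{3,2}(6)=6$ for the centered second moment, and writing each part relative to the center $\Pi^{1/6}$ to identify negation with $\flip_3$. Your additions are correct refinements of points the paper leaves implicit. These are the explicit identity $\sigma_2(\sms{A}{3})=6\sigma_2(A)+4\sigma_1(A)^2$, the complementary-triple check that negation preserves $3$-sums, and the caveat that the scaling conclusion needs a nonzero centered second moment.
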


\begin{proof}
	For (i), each part $\lambda_i$ appears in $\binom{5}{2} = 10$ of the $\binom{6}{3} = 20$ parts of $\pre_3(\lambda)$. Thus $\prod_{w \in \pre_3(\lambda)} w = (\prod_{i=1}^6 \lambda_i)^{10}$. Taking the 10-th root yields $\prod_{i=1}^6 \lambda_i = \prod_{i=1}^6 \mu_i = \Pi$.
	
	For (ii), by \eqref{eq:moser_recurrence}, $\sigma_2(\sms{\pi(V)}{3}) = F_{3,2}(6) \sigma_2(\pi(V)) + \mathcal{P}_2(\sigma_1(\pi(V)))$. Because $F_{3,2}(6) = 6 \ne 0$, centered second moments are uniquely determined. Scaling by $c$ scales the variance by $c^2$, which forces $c^2 = 1$, so $c = \pm 1$.
	
	For (iii), let $\lambda_i = C_0 \prod_{j=1}^m p_j^{v_{i,j}'}$, where $C_0 = \Pi^{1/6}$ and $\sum_{i=1}^6 \mathbf{v}_i' = \mathbf{0}$. The partition generated by $-\mathbf{v}'$ has parts $C_0 \prod p_j^{-v_{7-i,j}'} = C_0^2 / (C_0 \prod p_j^{v_{7-i,j}'}) = \Pi^{1/3} / \lambda_{7-i} = \flip_3(\lambda)_i$.
\end{proof}

\subsection{Ewell's Theorem and the Embedding of Triplets}

We recall Ewell's (1968) classification theorem for 3-sums at length 6 in the classical additive setting.

\begin{lemma}[\cite{ewell1968determination}]
	\label{lem:ewell_classification_s3}
	Let $X$ and $Y$ be any two $3$-equivalent $6$-multisets in $\CC$ normalized to have zero sum $(\sigma_1(X) = \sigma_1(Y) = 0)$. If $Y \ne \pm X$, then the pair $\{X, Y\}$ must belong to one of two families:
	\begin{enumerate}[label=\textup{(\roman*)}]
		\item \textbf{Family 1 (Two-parameter family)}: There exist parameters $a, b \in \CC$ with $a \ne \pm b$ such that $\{X, Y\} = \{X(a, b), Y(a, b)\}$, where
		\begin{equation}
			\label{eq:ewell_fam1}
			\begin{aligned}
				X(a, b) &= \leftb a, b, -a-b, 2b-a, -2a-3b, 3a+b \rightb, \\
				Y(a, b) &= \leftb a, b, -a-b, 2a-b, -2b-3a, 3b+a \rightb.
			\end{aligned}
		\end{equation}
		
		\item \textbf{Family 2 (One-parameter family)}: There exists a non-zero parameter $B \in \CC \setminus \{0\}$ such that $\{X, Y\} = \{X_0, Y_0\} \cdot \frac{B}{3}$, where
		\begin{equation}
			\label{eq:ewell_fam2}
			\begin{aligned}				
				X_0 &= \leftb -28, -4, -1, 2, 11, 20 \rightb, \\
				Y_0 &= \leftb -16, -10, -7, -4, 8, 29 \rightb.
			\end{aligned}
		\end{equation}
		
	\end{enumerate}
\end{lemma}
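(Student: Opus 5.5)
The strategy is to turn $3$-equivalence of zero-sum $6$-multisets into a system of polynomial equations in the elementary symmetric functions of $X$ and $Y$. Then I would decompose the resulting variety into irreducible components and identify each component with $Y=X$, with $Y=-X$, or with one of the two families in \eqref{eq:ewell_fam1}--\eqref{eq:ewell_fam2}.

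\emph{Step 1: reduce to an even polynomial.} With $\sigma_1(X)=0$, the complement of any $3$-subset is a $3$-subset whose sum is the negative of the first. Hence $\sms{X}{3}$ is symmetric under $t\mapsto -t$ and consists of $10$ pairs $\pm t_j$. So $\sms{X}{3}=\sms{Y}{3}$ is equivalent to the identity
\[
Q_X(z):=\prod_{j=1}^{10}\bigl(z^2-t_j(X)^2\bigr)=Q_Y(z).
\]
The coefficients of $Q_X$ are polynomials in $e_2,\dots,e_6$ of $X$ (recall $e_1=0$). Equivalently, the condition is that the even power sums $\sigma_{2k}(\sms{X}{3})$ agree for $1\le k\le 10$.

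\emph{Step 2: use the Moser coefficients to fix the easy invariants.} Here
\[
F_{3,k}(6)=15-6\cdot 2^{k-1}+3^{k-1},
\]
which gives the values $10,6,0,-6,0,66$ for $k=1,\dots,6$. By \cref{lem:moser_inversion}, the equations with $k=2$ and $k=4$ force $\sigma_2(X)=\sigma_2(Y)$ and $\sigma_4(X)=\sigma_4(Y)$; the odd-order terms drop out because $\sigma_1=0$. Hence $e_2(X)=e_2(Y)$ and $e_4(X)=e_4(Y)$ as well. The vanishing $F_{3,3}(6)=0$ leaves $\sigma_3$, equivalently $e_3$, free a priori; this is precisely the freedom exploited by the mirror $Y=-X$, which flips the sign of $e_3$ and of $e_5$.

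What remains is to impose the higher coefficients of $Q_X=Q_Y$. The unknowns are then $e_3,e_5,e_6$ of $X$ and of $Y$, with $e_2,e_4$ shared as parameters.

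\emph{Step 3: eliminate and decompose.} I would compute a Gröbner basis of the resulting ideal in the variables $(e_2,e_4;\,e_3,e_5,e_6;\,e_3',e_5',e_6')$ over $\QQ$ and take its primary decomposition. The components should be:
\begin{itemize}
\item the diagonal $Y=X$;
\item the mirror $(e_3',e_5',e_6')=(-e_3,-e_5,e_6)$, i.e.\ $Y=-X$;
\item a $2$-dimensional component;
\item a $1$-dimensional component, namely a line through the origin, since the whole setup is homogeneous.
\end{itemize}

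For the last two components I would exhibit explicit parametrizations. One checks directly that $X(a,b)$ and $Y(a,b)$ are $3$-equivalent with zero sum (a finite identity among the $20$ triple sums). One likewise checks that $X_0$ and $Y_0$ are $3$-equivalent. A dimension count then shows these fill the respective components: the map $(a,b)\mapsto(X(a,b),Y(a,b))$ has generically finite fibers, and scaling accounts for the line. Finally, the symmetric functions are lifted back to multisets; this is unique up to permutation, so no further solutions arise.

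\emph{Main obstacle.} The hard step is the completeness part of Step 3: proving that there are no further components, and handling embedded and degenerate loci. Such loci include $a=\pm b$, where Family 1 collapses into the diagonal or the mirror, and points where several families meet. I would first attempt the elimination in the reduced variables after scaling, setting $e_2=1$ or treating $e_2=0$ separately, to cut the dimension by one. This keeps the Gröbner computation tractable, and the error-prone coefficient bookkeeping is then verified symbolically.
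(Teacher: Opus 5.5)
The paper gives no proof of this lemma; it quotes it from Ewell. Your Steps~1--2 are sound: $Q_X=Q_Y$ encodes $3$-equivalence, $F_{3,k}(6)=10,6,0,-6,0,66$, and the cases $k=2,4$ force $e_2,e_4$ to be shared. The problem is Step~3. It cannot land where you say, because the lemma as literally formulated is false: it omits the sign freedom $Y\mapsto -Y$.

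Take $(a,b)=(2,1)$. Then $X(2,1)=\leftb -7,-3,0,1,2,7\rightb$ and $-Y(2,1)=\leftb -5,-3,-2,-1,3,8\rightb$.
\begin{itemize}
\item Both have zero sum and the same $3$-sums $\pm\{0,1,2,3,4,5,6,8,9,10\}$, since $-Y\eqs{3}Y$ when $n=2s$.
\item $-Y(2,1)\neq\pm X(2,1)$.
\item Any pair $\{X(a',b'),Y(a',b')\}$ shares the sub-multiset $\leftb a',b',-a'-b'\rightb$, whereas these two share only $-3$.
\item Neither is a multiple of $X_0$ or $Y_0$, since $X(2,1)$ contains $0$.
\end{itemize}
Likewise $\{X_0,-Y_0\}$ is a $3$-equivalent zero-sum pair covered by neither family, since $X_0$ and $-Y_0$ are disjoint and not proportional.

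Consequently the decomposition you predict is wrong in your ordered coordinates $(e_2,e_4;e_3,e_5,e_6;e_3',e_5',e_6')$. The reparametrizations $(a,b)\mapsto(b,a)$ and $(a,b)\mapsto(-a,-b)$ only identify $(X,Y)$ with $(Y,X)$, $(-X,-Y)$ and $(-Y,-X)$. The map $(a,b)\mapsto(X(a,b),Y(a,b))$ is finite, so its image is closed, and by the example it misses $(X(2,1),-Y(2,1))$. Hence the image of $(a,b)\mapsto(X(a,b),-Y(a,b))$ is a second $2$-dimensional irreducible piece. Family~2 likewise splits into four curves: the orbits of $(X_0,Y_0)$, $(X_0,-Y_0)$, $(Y_0,X_0)$, $(Y_0,-X_0)$ under $e_k\mapsto t^ke_k$. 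These are weighted curves, not lines through the origin.

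What the computation can establish is the classification up to signs: $\{X,Y\}=\{\varepsilon_1X(a,b),\varepsilon_2Y(a,b)\}$ or $\{\varepsilon_1X_0,\varepsilon_2Y_0\}\cdot B/3$ with $\varepsilon_i\in\{\pm1\}$. This is also what the paper actually uses in \cref{prop:triplet_rigidity}, where the class is the quartet $\{X,-X,Y,-Y\}$.

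Separately, all of the lemma's content lies in the claim that no further components exist. You only announce this as the expected output of a Gr\"obner/primary decomposition you have not performed. To make it a proof you must actually carry that computation out. You also need to check absolute irreducibility of the components, since a decomposition over $\QQ$ may split further over $\CC$. Only then does your dimension count justify saying that the parametrizations fill those components.
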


Let $\lambda_1, \lambda_2, \lambda_3 \in \Part_6(N)$ satisfy $\pre_3(\lambda_1) = \pre_3(\lambda_2) = \pre_3(\lambda_3)$. By \cref{lem:s3_invariants}(i), they share the center $\mathbf{v}_{\mathrm{mean}} := \frac{1}{6} \sum_{\mathbf{v} \in V_l} \mathbf{v}$. We define the centered multisets by $V_l' := V_l - \mathbf{v}_{\mathrm{mean}}$, which satisfy $\sum_{\mathbf{v}' \in V_l'} \mathbf{v}' = \mathbf{0}$. 

This centering serves two purposes:
\begin{enumerate}[label=\textup{(\arabic*)}]
	\item It satisfies the zero-sum condition ($\sigma_1 = 0$) required by \cref{lem:ewell_classification_s3};
	\item For any centered 6-multiset $A$ with $\sum A = \mathbf{0}$, complementary triplets sum to opposite values, establishing $V_l' \eqs{3} -V_l'$ for $n = 2s = 6$ \cite{selfridge1958determination}.
\end{enumerate}

\begin{proposition}
	\label{prop:triplet_rigidity}
	Let $\lambda_1, \lambda_2, \lambda_3 \in \Part_6(N)$ be three distinct partitions of equal size $N$ satisfying $\pre_3(\lambda_1) = \pre_3(\lambda_2) = \pre_3(\lambda_3)$. Then their centered prime-exponent multisets $V_1', V_2', V_3'$ form a $3$-element subset of an Ewell quartet $\{X, -X, Y, -Y\}$ belonging to Family 1 or Family 2.
	
	Furthermore:
	\begin{enumerate}[label=\textup{(\alph*)}]
		\item In \textbf{Family 1}, the identities $Y(a, b) = X(b, a)$ and $-X(a, b) = X(-a, -b)$ generate an automorphism group isomorphic to $\ZZ_2 \times \ZZ_2$ that acts transitively on all four $3$-element subsets of $\{X, -X, Y, -Y\}$. Consequently, without loss of generality, every candidate triplet in Family 1 is algebraically isomorphic to
		\begin{equation*}
			\{ \lambda_X, \; \lambda_{-X}, \; \lambda_Y \}, \quad \text{requiring} \quad |\lambda_X| = |\lambda_{-X}| = |\lambda_Y|.
		\end{equation*}
		
		\item In \textbf{Family 2}, $X_0$ and $Y_0$ are asymmetric, partitioning the $3$-element subsets into two distinct configurations:
		\begin{itemize}
			\item \textbf{Type A}: $\{ \lambda_X, \lambda_{-X}, \lambda_Y \}$, requiring $|\lambda_X| = |\lambda_{-X}|$ and $|\lambda_X| = |\lambda_Y|$;
			\item \textbf{Type B}: $\{ \lambda_X, \lambda_Y, \lambda_{-Y} \}$, requiring $|\lambda_Y| = |\lambda_{-Y}|$ and $|\lambda_X| = |\lambda_Y|$.
		\end{itemize}
	\end{enumerate}
\end{proposition}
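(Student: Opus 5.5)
I will reduce the three centered multisets to one-dimensional projections, apply \cref{lem:ewell_classification_s3}, and then lift the conclusion back to $\QQ^m$ using the same pigeonhole-and-polynomial-vanishing argument as in \cref{lem:collinearity}. The two case distinctions (a), (b) then come from analysing how the symmetries of each family act on the four-element class $\{X,-X,Y,-Y\}$.

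\textbf{Step 1 (projection).} Fix the generic functionals $L_M$ for all $M \ge M_0$, with $M_0$ large enough that $L_M$ is injective on the finite support of $V_1' \cup V_2' \cup V_3'$; clear denominators by multiplying by $6$ if necessary. By \cref{lem:s3_invariants}(i) the three multisets share a center. Hence $L_M(V_1'), L_M(V_2'), L_M(V_3')$ are zero-sum $6$-multisets of integers with equal $3$-sums, and they are pairwise distinct by injectivity of $L_M$. By observation (2) preceding the proposition, the class of $L_M(V_1')$ contains $\pm L_M(V_1')$. Since it also contains at least two further distinct elements, some $L_M(V_l')$ with $L_M(V_l') \ne \pm L_M(V_1')$ exists, so \cref{lem:ewell_classification_s3} applies. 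The pair then lies in Family 1 or Family 2, and the full $3$-equivalence class is the quartet $\{X,-X,Y,-Y\}$. This uses that Ewell's families exhaust all non-mirror partners, so the class cannot exceed four elements. Thus $\{L_M(V_l')\}_{l=1}^3$ is a $3$-subset of such a quartet.

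\textbf{Step 2 (lifting).} Record, for each $M$, the family, which quartet member each $L_M(V_l')$ equals, and the permutations matching elements. There are finitely many such states, so by pigeonhole one state occurs for infinitely many $M$. In Family 2 the quartet is $\tfrac{B_M}{3}\{\pm X_0, \pm Y_0\}$. The same cross-multiplication as in \cref{lem:collinearity} shows the vectors of all three $V_l'$ equal $\mathbf{w}\cdot(\text{entries of } \pm X_0, \pm Y_0)$ for a single $\mathbf{w}\in\QQ^m$. In Family 1 the entries are linear forms in $(a_M,b_M)$. Solve for $a_M, b_M$ from two entries, namely the images of $a$ and $b$. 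Then every other entry is a fixed integer combination of these two, giving vector identities $\mathbf{v} = \alpha\mathbf{p}+\beta\mathbf{q}$ that hold for infinitely many $M$ and hence identically. So the configuration is $X(\mathbf{p},\mathbf{q})$ etc.\ with vector parameters, i.e.\ it is coplanar. In both cases the three $V_l'$ form a $3$-subset of a vector-valued Ewell quartet.

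\textbf{Step 3 (symmetry reduction).} For (a), verify directly that $X(b,a)$ equals $Y(a,b)$ as multisets, and that $X(-a,-b)=-X(a,b)$. The swap $(a,b)\mapsto(b,a)$ acts on the quartet as $X\leftrightarrow Y$, $-X\leftrightarrow -Y$. Negation acts as $X\leftrightarrow -X$, $Y\leftrightarrow -Y$. These generate a Klein four-group acting regularly on the quartet, hence transitively on its four $3$-subsets, since each subset is the complement of a single member. Relabeling parameters therefore reduces every triplet to $\{X,-X,Y\}$. The equal-size condition transports accordingly, because it is a condition on the unordered triple. For (b), $X_0$ and $Y_0$ are fixed numeric sets, so no parameter swap exists. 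The only available symmetry is the global negation $B\mapsto -B$, which swaps $X\leftrightarrow -X$ and $Y\leftrightarrow -Y$. This leaves two orbits of $3$-subsets: the complements of $\pm Y$ (Type A) and the complements of $\pm X$ (Type B).

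\textbf{Main obstacle.} I expect the main difficulty to be Step 1's claim that the whole class is exactly a quartet. It must be justified that every multiset $3$-equivalent to $X$ is one of $\pm X, \pm Y$ with the same parameters, so that three distinct members cannot straddle two different Ewell pairs. I would argue this as follows. Any third member $Z \ne \pm X$ pairs with $X$ in some family. By \cref{lem:s3_invariants}(ii) the variances match, which fixes the scale up to sign, and then uniqueness of the parameter representation forces $Z=\pm Y$.
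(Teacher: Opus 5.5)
Your proposal is correct in outline, but it is organized differently from the paper's proof.

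\textbf{How the two routes differ.} The paper stays at the partition level. Because $\flip_3$ is an involution, two of the three partitions have $V_2'\neq\pm V_1'$. The paper then applies \cref{lem:ewell_classification_s3} directly to these $\QQ^m$-valued centered multisets, asserts that the resulting class is the quartet, and places $V_3'$ among the two remaining members. The reduction to one dimension and the lift back come only afterwards, in \cref{lem:family2_simultaneous_collinearity,lem:family1_simultaneous_coplanarity}, applied to a quartet completed by a fourth multiset $V_4$.

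You instead project with $L_M$ first, so Ewell's lemma is used only where it is stated (zero-sum multisets of numbers), and you then lift back by pigeonhole and polynomial vanishing. This gives a real justification of the vector-valued quartet, which the paper takes for granted when it applies a statement about $\CC$ to vectors. It also lifts only the three realized members, so no auxiliary $V_4$ is needed. The cost is that you must carry out the lifting here, essentially duplicating the paper's later lemmas. Your Step 3 agrees with the paper's (a)--(b), with the Klein-four action and the $B\mapsto -B$ orbits made explicit.

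\textbf{The step neither you nor the paper proves.} This is that the whole $3$-equivalence class is $\{\pm X,\pm Y\}$; the paper simply asserts it. Read \cref{lem:ewell_classification_s3} up to the sign of the partner: $-Y\eqs{3}X$, yet $\{X,-Y\}$ is in general not literally of the stated form. Your sketch does not cover Family 1. Equal variances give one scalar equation on a two-parameter family, and ``uniqueness of the parameter representation'' is exactly what needs proof. You also need that no class meets both families. Both points are short:
\begin{enumerate}[label=\textup{(\arabic*)}]
\item Whether $0$ is a $3$-sum is a class invariant. It holds in Family 1, since $a+b+(-a-b)=0$. It fails in Family 2, since the $3$-sums of $X_0$ are $\pm 3k$ for $k\in\{1,\dots,7,9,10,11\}$. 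So no class meets both families.
\item In Family 1, put $c=-a-b$. Then $X(a,b)=\{a,b,c\}\cup\{2b-a,2c-b,2a-c\}$ and $Y(a,b)=\{a,b,c\}\cup\{2a-b,2b-c,2c-a\}$. A representation $X=X(a',b')$ needs a zero-sum sub-triple as its base.
\item A zero-sum triple using two elements of one half and one of the other has the same values as the first half. So the only candidate bases are $\{a,b,c\}$ and its complement.
\item Using the complement as base would force $\{a,b,c\}$ to equal either $\{7a,7b,7c\}$ or $\{a-4b+4c,\,b-4c+4a,\,c-4a+4b\}$. Both have sum of squares $49(a^2+b^2+c^2)$, which is impossible for a nonzero real triple.
\item Using $\{a,b,c\}$ with the reversed orientation produces $Y\neq X$. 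Hence $(a',b')$ is a cyclic relabeling of $(a,b,c)$, and such a relabeling leaves $Y$ unchanged.
\end{enumerate}
So $X$ has a unique partner, and every $Z\eqs{3}X$ with $Z\neq\pm X$ equals $\pm Y$.
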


\begin{proof}
	Because $\flip_3$ is an involution on $\Part_6$, each partition has at most one distinct $\flip_3$ partner. Among three pairwise distinct partitions $\{\lambda_1, \lambda_2, \lambda_3\}$, at most two can be $\flip_3$ inverses. Thus, there exist at least two partitions, say $\lambda_1$ and $\lambda_2$, such that $\lambda_2 \ne \flip_3(\lambda_1)$, which by \cref{lem:s3_invariants}(iii) is equivalent to $V_2' \ne \pm V_1'$.
	
	Since $V_1' \eqs{3} V_2'$ with $V_2' \ne \pm V_1'$ and $\sigma_1 = 0$, by \cref{lem:ewell_classification_s3}, the pair $\{V_1', V_2'\}$ generates an equivalence class belonging to Family~1 or Family~2, and the complete 3-equivalence class containing them is the four-member quartet $\{X, -X, Y, -Y\}$. 
	
	By transitivity, the third multiset $V_3'$ also belongs to this 3-equivalence class, forcing $V_3' \in \{X, -X, Y, -Y\}$. Because $V_3' \ne V_1'$ and $V_3' \ne V_2'$ by hypothesis, $V_3'$ must be one of the remaining two members: $V_3' \in \{-X, -Y\}$. Hence, $\{V_1', V_2', V_3'\}$ is a 3-element subset of $\{X, -X, Y, -Y\}$.
	
	Statements (a) and (b) follow immediately from the parameter symmetries of \eqref{eq:ewell_fam1}--\eqref{eq:ewell_fam2}.
\end{proof}

By \cref{prop:triplet_rigidity}, the centered exponent multisets $\{V_1', V_2', V_3'\}$ of our hypothetical triplet are completed by a unique fourth multiset, denoted $V_4' \in \{X, -X, Y, -Y\}$, to form the full four-member equivalence class $\{V_1', V_2', V_3', V_4'\} = \{X, -X, Y, -Y\}$ in the additive setting. 

Setting $V_4 := V_4' + \mathbf{v}_{\mathrm{mean}}$, the collection $\{V_1, V_2, V_3, V_4\}$ forms an ambient quartet of 3-equivalent 6-multisets in $\ZZ_{\ge 0}^m$. To eliminate the candidate triplet $\{\lambda_1, \lambda_2, \lambda_3\}$, it is both natural and sufficient to establish the simultaneous geometric rigidity (collinearity or coplanarity) of the entire enclosing quartet $\{V_1, \dots, V_4\}$, since the geometric rigidity of the triplet $\{V_1, V_2, V_3\}$ is simply the immediate restriction to a sub-collection.

\subsection{The Global Projection Dichotomy}

For each integer $M \in \NN$, consider the linear functional $L_M(\mathbf{x}) := \sum_{k=1}^m x_k M^{k-1}$. Choose $M_0 > 3 \max_{l, i, k} (v_{l, i})_k$ such that $L_M$ is injective on $\bigcup_{l=1}^4 \supp(V_l)$.

For every $M \ge M_0$, $\{L_M(V_1), \dots, L_M(V_4)\}$ forms an additive quartet of 6-multisets in $\ZZ$. By \cref{lem:ewell_classification_s3}, every centered 1D quartet belongs either to Family 1 or Family 2. We partition the infinite set of integers $\NN_{\ge M_0}$ into two sets:
\begin{align*}
	\mathcal{M}_1 &:= \left\{ M \ge M_0 : \{L_M(V_1), \dots, L_M(V_4)\} \subset \text{Family 1} \right\}, \\
	\mathcal{M}_2 &:= \left\{ M \ge M_0 : \{L_M(V_1), \dots, L_M(V_4)\} \subset \text{Family 2} \right\}.
\end{align*}
By the infinite Pigeonhole Principle, at least one of $\mathcal{M}_1$ or $\mathcal{M}_2$ is infinite.

\subsection{Branch 1: Elimination of Family 2 (The One-Parameter Family)}

We first examine Family 2.

\begin{lemma}
	\label{lem:family2_simultaneous_collinearity}
	Suppose $\mathcal{M}_2$ is infinite. Then there exist a center vector $\mathbf{v}_{\mathrm{mean}} \in \QQ^m$, a non-zero primitive vector $\mathbf{w} \in \ZZ^m$, and a permutation $\tau \in S_4$ such that the centered vector multisets $V_l' := V_l - \mathbf{v}_{\mathrm{mean}}$ satisfy
	\begin{equation}
		\label{eq:fam2_vector_simultaneous}
		V_{\tau(1)}' = X_0 \cdot \mathbf{w}, \quad V_{\tau(2)}' = -X_0 \cdot \mathbf{w}, \quad V_{\tau(3)}' = Y_0 \cdot \mathbf{w}, \quad V_{\tau(4)}' = -Y_0 \cdot \mathbf{w}.
	\end{equation}
	In particular, all 24 vectors across the four multisets are collinear.
\end{lemma}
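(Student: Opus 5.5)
The plan is to mirror the proof of \cref{lem:collinearity}, now applied to the whole quartet. For each $M \in \mathcal{M}_2$, project by $L_M$. Since $L_M$ is linear, $L_M(V_l') = L_M(V_l) - L_M(\mathbf{v}_{\mathrm{mean}})$, and these four centered integer (after scaling by $6$, rational) $6$-multisets have zero sum. They are pairwise $3$-equivalent, and $L_M$ is injective on the supports, so they are pairwise distinct. By \cref{lem:ewell_classification_s3} (Family 2) they constitute $\{X_0, -X_0, Y_0, -Y_0\}\cdot c_M$ with $c_M = B_M/3 \neq 0$, a single scalar shared by all four multisets.

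The bookkeeping of which $V_l'$ goes to which member, together with the internal matching of the six elements, is encoded by a state in the finite set $S_4 \times (S_6)^4$. By the pigeonhole principle there is a fixed state $(\tau, \rho_1, \dots, \rho_4)$ valid for infinitely many $M \in \mathcal{M}_2$; call this set $\mathcal{M}_\infty$. Then for every $M \in \mathcal{M}_\infty$ we have simultaneously, for all $24$ vectors,
\[
L_M(\mathbf{x}) = c_M\, t(\mathbf{x}),
\]
where $\mathbf{x}$ ranges over the centered vectors and $t(\mathbf{x}) \in \pm X_0 \cup \pm Y_0$ is the fixed assigned label, independent of $M$.

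Next, eliminate $c_M$. Fix a reference centered vector $\mathbf{x}_*$ with label $t_* \ne 0$; for example the element labelled $-1$ in $X_0$, or any nonzero entry. For every other $\mathbf{x}$, the vector $t_*\mathbf{x} - t(\mathbf{x})\mathbf{x}_* \in \QQ^m$ satisfies $L_M(t_*\mathbf{x} - t(\mathbf{x})\mathbf{x}_*) = 0$ for infinitely many $M$. Since $L_M$ evaluated on a fixed vector is a polynomial in $M$ of degree at most $m-1$, that vector must vanish. Hence every centered vector equals $t(\mathbf{x})\,\mathbf{u}$ with $\mathbf{u} := \mathbf{x}_*/t_* \in \QQ^m$, and $\mathbf{u} \ne \mathbf{0}$ because the multisets are distinct. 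This already gives collinearity and \eqref{eq:fam2_vector_simultaneous} with direction $\mathbf{u}$.

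The remaining obstacle is the integrality claim: $\mathbf{w}$ should be a primitive integer vector, and $\mathbf{v}_{\mathrm{mean}}$ should absorb any rescaling. I expect this to be the main difficulty. Differences of centered vectors are differences of genuine integer exponent vectors, so $(t - t')\mathbf{u} \in \ZZ^m$ for all labels $t,t'$ occurring. Within $X_0$ the differences include $-1-(-4)=3$ and $2-(-1)=3$, and across $X_0$ and $Y_0$ differences such as $-4-(-7)=3$ and $29-20=9$ appear. The gcd of all differences among $\pm X_0 \cup \pm Y_0$ includes, for example, $-1-(-2)=1$ (with $-2 \in -X_0$, $-1\in X_0$). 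Hence $\mathbf{u} \in \ZZ^m$.

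Finally, write $\mathbf{u} = g\,\mathbf{w}$ with $\mathbf{w}$ primitive and $g \in \ZZ\setminus\{0\}$. The honest statement then has labels $gX_0$ and so on. I would therefore interpret \eqref{eq:fam2_vector_simultaneous} up to this integer scaling, which is harmless because Family 2 is itself closed under scaling by $B/3$. Alternatively I would simply take $\mathbf{w} := \mathbf{u}$ and drop primitivity if the later argument does not need it. I would check this point against how the lemma is used downstream.
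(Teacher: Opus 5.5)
Your argument is correct and is essentially the paper's proof. Both use the pigeonhole principle over $S_4 \times (S_6)^4$, eliminate the shared scale, and apply the polynomial-in-$M$ vanishing argument, with integrality coming from a unit gap between a label in $X_0$ and one in $-X_0$; the paper simply takes the pair $2\in X_0$, $1\in -X_0$ as its reference from the start, so $\mathbf{w}=\mathbf{v}_2-\mathbf{v}_1$ is integral by construction.

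You are also right about primitivity. The paper's proof produces only this integer vector, which is forced up to sign and need not be primitive (doubling all exponent vectors of a quartet doubles it). Nothing downstream uses primitivity, only $r=\prod_j p_j^{w_j}>0$.
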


\begin{proof}
	For each $M \in \mathcal{M}_2$, $\{L_M(V_1), \dots, L_M(V_4)\} = \{X_0, -X_0, Y_0, -Y_0\} \beta_M + d_M$. Because the sum across all 24 elements is zero, $d_M = L_M(\mathbf{v}_{\mathrm{mean}})$ where $\mathbf{v}_{\mathrm{mean}} := \frac{1}{24} \sum_{l=1}^4 \sum_{i=1}^6 \mathbf{v}_{l, i}$.
	
	There are only $|S_4 \times (S_6)^4| = 24 \times (720)^4 < \infty$ possible permutation identifications between the 24 vectors and the 24 theoretical coordinates in $(\pm X_0) \cup (\pm Y_0)$. By the Pigeonhole Principle, a single identification holds for an infinite subset $\mathcal{M}_\infty \subseteq \mathcal{M}_2$.  Under this fixed identification, for each coordinate $z \in (\pm X_0) \cup (\pm Y_0) $, we denote by $\mathbf{v}_z \in \ZZ_{\ge 0}^m$ the unique vector in the quartet satisfying
	\begin{equation*}
		L_M(\mathbf{v}_z') = z \beta_M \qquad (\forall M \in \mathcal{M}_\infty),
	\end{equation*}
	where $\mathbf{v}_z' := \mathbf{v}_z - \mathbf{v}_{\mathrm{mean}}$.
	
	Notice that the integer $2 \in X_0$ and the integer $1 \in -X_0$ each appear with multiplicity one in the 24 coordinates. Subtracting their projection equations isolates $\beta_M$ with unit difference $\Delta = 2 - 1 = 1$:
	\begin{equation*}
		L_M(\mathbf{v}_2' - \mathbf{v}_1') = (2 - 1) \beta_M = \beta_M.
	\end{equation*}
	Because $\mathbf{v}_2' - \mathbf{v}_1' = \mathbf{v}_2 - \mathbf{v}_1$, we define the direction vector as the difference of these two integer lattice vectors:
	\begin{equation*}
		\mathbf{w} := \mathbf{v}_2 - \mathbf{v}_1 \in \ZZ^m.
	\end{equation*}
	Since $V \ne U$, $\mathbf{w} \ne \mathbf{0}$. For every vector $\mathbf{v}_z'$ in the quartet, substituting $\beta_M = L_M(\mathbf{w})$ gives
	\begin{equation*}
		L_M(\mathbf{v}_z' - z \mathbf{w}) = 0
	\end{equation*}
	for all $M \in \mathcal{M}_\infty$. Because a polynomial in $M$ of degree at most $m-1$ vanishing on $\mathcal{M}_\infty$ must be identically zero, this establishes the exact vector identity
	\begin{equation*}
		\mathbf{v}_z' = z \mathbf{w}
	\end{equation*}
	identically in $\ZZ^m$ for every vector in the quartet. This establishes \eqref{eq:fam2_vector_simultaneous} with $\mathbf{w} \in \ZZ^m$.
\end{proof}

By \cref{lem:family2_simultaneous_collinearity}, candidate partitions in Family~2 are generated by a base $r = \prod_{j=1}^m p_j^{w_j}$ and a common positive prefactor $C_0 = \Pi^{1/6} > 0$:
\begin{equation*}
	\lambda_{X, i} = C_0 \cdot r^{x_i}, \quad \lambda_{-X, i} = C_0 \cdot r^{-x_i}, \quad \lambda_{Y, i} = C_0 \cdot r^{y_i}, \quad \lambda_{-Y, i} = C_0 \cdot r^{-y_i} \qquad (1 \le i \le 6),
\end{equation*}
where $x_i \in X_0$ and $y_i \in Y_0$.
Shifting exponents by $+28$ for $X_0$ and by $+29$ for $Y_0$, the equal-size requirements are expressed by three univariate polynomials in $\ZZ[r]$:
\begin{align*}
	P_1(r) &:= \sum_{x \in X_0} r^{x+28} - \sum_{x \in X_0} r^{-x+28} \notag \\
	&= 1 - r^8 - r^{17} + r^{24} - r^{26} + r^{27} - r^{29} + r^{30} - r^{32} + r^{39} + r^{48} - r^{56}, \\
	P_2(r) &:= \sum_{x \in X_0} r^{x+28} - \sum_{y \in Y_0} r^{y+28} \notag \\
	&= 1 - r^{12} - r^{18} - r^{21} + r^{27} + r^{30} - r^{36} + r^{39} + r^{48} - r^{57}, \\
	Q_1(r) &:= \sum_{y \in Y_0} r^{y+29} - \sum_{y \in Y_0} r^{-y+29} \notag \\
	&= r^{58} - r^{45} - r^{39} + r^{37} - r^{36} - r^{33} + r^{25} + r^{22} - r^{21} + r^{19} + r^{13} - 1.
\end{align*}

\begin{proposition}
	\label{prop:fam2_no_triplets}
	Family 2 admits no partition triplets of equal size. That is, neither Type~A nor Type~B can be satisfied for any positive base $r\neq 1$.
\end{proposition}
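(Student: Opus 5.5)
The plan is to reduce both configurations to simultaneous vanishing of univariate integer polynomials in the base $r$, and then rule out every admissible $r \ne 1$.

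By \cref{lem:family2_simultaneous_collinearity}, the four candidate partitions are $\lambda_{\pm X,i} = C_0 r^{\pm x_i}$ and $\lambda_{\pm Y,i} = C_0 r^{\pm y_i}$. Here $r = \prod_j p_j^{w_j} \in \QQ_{>0}$, and $r \ne 1$ because $\mathbf{w} \ne \mathbf{0}$ and prime factorization is unique.

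The size conditions translate as follows. For Type~A, $|\lambda_X| = |\lambda_{-X}|$ and $|\lambda_X| = |\lambda_Y|$. After dividing by $C_0$ and multiplying by $r^{28}$, these become $P_1(r) = 0$ and $P_2(r) = 0$. For Type~B, $|\lambda_Y| = |\lambda_{-Y}|$ and $|\lambda_X| = |\lambda_Y|$, which become $Q_1(r) = 0$ and $P_2(r) = 0$. Before using these, I would recheck the displayed expansions of $P_1$, $P_2$, $Q_1$ from $X_0$ and $Y_0$ by direct expansion. The only point needing care is that the common cancellations and the shifts by $28$ and $29$ are consistent. These are routine computations.

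\textbf{Main step (rational roots).} The key observation is that only rational $r$ can occur, so a single equation per type already suffices, exactly as in \cref{prop:P_no_rational_roots}.
\begin{itemize}
\item $P_1$ has constant term $1$ and leading coefficient $-1$ (the $r^{56}$ term).
\item $P_2$ has constant term $1$ and leading coefficient $-1$ (the $r^{57}$ term).
\item $Q_1$ has leading coefficient $1$ and constant term $-1$.
\end{itemize}
By the Rational Root Theorem, every rational root of each of these polynomials lies in $\{\pm 1\}$. Positivity excludes $-1$, so the only possibility is $r = 1$. But $r = 1$ forces all parts of every partition in the quartet to equal $C_0$, so the partitions coincide. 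This contradicts the distinctness of the triplet, and both Type~A and Type~B are eliminated.

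\textbf{Strengthening to real $r$.} Since the statement speaks of any positive base $r \ne 1$, I would also give a real-root argument, which is where the real work lies. I would compute
\[
G_A = \gcd(P_1, P_2) \quad\text{and}\quad G_B = \gcd(Q_1, P_2) \quad\text{in } \QQ[r].
\]
I expect each gcd to reduce to a product of $(r-1)$ with cyclotomic-type factors, which have no positive real roots other than $1$. The obstacle I anticipate is that the gcd could contain a nontrivial factor. In that case I would isolate its positive real roots by a Sturm sequence or Descartes' rule applied after substituting $r = 1 + t$, and check that the only positive real root is $r = 1$. These symbolic checks would be recorded in the appendix scripts. For the purposes of \cref{thm:main_s3_fibers}, however, the rational-root argument above is already sufficient.
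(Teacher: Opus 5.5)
Your proof is correct, but it finishes differently from the paper. The paper does not use rationality of $r$ at this step. It computes $\gcd(P_1,P_2)=\gcd(P_2,Q_1)=(r-1)^3(r+1)$ in $\QQ[r]$ by computer algebra, so the two size conditions of each type share no \emph{complex} root other than $\pm1$. You instead use $r=\prod_j p_j^{w_j}\in\QQ_{>0}$ and apply the Rational Root Theorem to a single polynomial, as in \cref{prop:P_no_rational_roots}. The coefficient data you quote is correct: leading and constant coefficients $-1$ and $1$ for $P_1$ and $P_2$, and $1$ and $-1$ for $Q_1$.

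Your route is elementary, needs no machine computation, and proves more. Since one size condition already fails, Family~2 cannot even produce an equal-size pair $\{\lambda_X,\lambda_{-X}\}$, $\{\lambda_X,\lambda_Y\}$ or $\{\lambda_Y,\lambda_{-Y}\}$. In particular, $P_2=0$ alone rules out both types. The paper's gcd route needs both equations and a computation, but it does not depend on arithmetic: it still works if $r$ is only known to be a positive real. If the statement is read that way, your rational-root step alone would not suffice. The gcd computation you sketch as a strengthening is exactly what the paper does, and the extra factor $r+1$ it finds is the cyclotomic factor you anticipated.

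Because rationality of $r$ is the one thing your argument depends on, it should be justified from the partitions that are actually realized. In Type~B, the collinearity lemma defines $\mathbf w=\mathbf v_2-\mathbf v_1$, and $\mathbf v_1$ (coordinate $1\in -X_0$) lies in the unrealized fourth multiset $2\mathbf v_{\mathrm{mean}}-V_X$. That multiset is not a priori integral. Integrality still holds: the identity $\mathbf v'_z=z\mathbf w$ gives $\mathbf w=\mathbf v_{11}-\mathbf v_{10}$. Here $11\in X_0$ and $10\in -Y_0$ are realized in $\lambda_X$ and $\lambda_{-Y}$, so $\mathbf w\in\ZZ^m$ and $r\in\QQ_{>0}$, as your argument requires.
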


\begin{proof}
	We analyze both configurations via the Euclidean algorithm over $\QQ[r]$:
	\begin{enumerate}[label=\textup{(\roman*)}]
		\item \textbf{Type A ($\{\lambda_X, \lambda_{-X}, \lambda_Y\}$)} requires $P_1(r) = 0$ and $P_2(r) = 0$. By the Euclidean algorithm, any common root of $P_1$ and $P_2$ in $\CC$ must be a root of their greatest common divisor:
		\begin{equation*}
			\gcd(P_1(r), P_2(r)) = (r - 1)^3 (r + 1).
		\end{equation*}
		Hence, the only common complex roots are $r = 1$ and $r = -1$.
		
		\item \textbf{Type B ($\{\lambda_X, \lambda_Y, \lambda_{-Y}\}$)} requires $P_2(r) = 0$ and $Q_1(r) = 0$. Computing their greatest common divisor yields:
		\begin{equation*}
			\gcd(P_2(r), Q_1(r)) = (r - 1)^3 (r + 1).
		\end{equation*}
		Hence, the only common complex roots are again $r = 1$ and $r = -1$.
	\end{enumerate}
	Neither configuration admits any non-trivial common root in $(0, \infty)$, no triplet can arise from Family 2.
\end{proof}

\subsection{Branch 2: Elimination of Family 1 (The Two-Parameter Family)}

We next treat Family 1. We first prove that any multidimensional quartet projecting into Family 1 is spanned by at most two basis vectors.

\begin{lemma}
	\label{lem:family1_simultaneous_coplanarity}
	Suppose $\mathcal{M}_1$ is infinite. Then there exist a center vector $\mathbf{v}_{\mathrm{mean}} \in \QQ^m$, two vectors $\mathbf{a}, \mathbf{b} \in \QQ^m$, and a permutation $\tau \in S_4$ such that the centered vector multisets $V_l' := V_l - \mathbf{v}_{\mathrm{mean}}$ simultaneously satisfy
	\begin{equation*}
		V_{\tau(1)}' = X(\mathbf{a}, \mathbf{b}), \quad V_{\tau(2)}' = -X(\mathbf{a}, \mathbf{b}), \quad V_{\tau(3)}' = Y(\mathbf{a}, \mathbf{b}), \quad V_{\tau(4)}' = -Y(\mathbf{a}, \mathbf{b}),
	\end{equation*}
	where $X(\mathbf{a}, \mathbf{b})$ and $Y(\mathbf{a}, \mathbf{b})$ are the vector Ewell linear forms in \eqref{eq:ewell_fam1}. In particular, all 24 vectors across the four multisets are spanned by $\mathbf{a}$ and $\mathbf{b}$.
\end{lemma}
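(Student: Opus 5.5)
The proof will mirror \cref{lem:family2_simultaneous_collinearity}, with the one-dimensional scaling parameter $\beta_M$ replaced by the pair $(a_M, b_M)$ of Ewell parameters. For each $M \in \mathcal{M}_1$, the centered projected quartet $\{L_M(V_l) - L_M(\mathbf{v}_{\mathrm{mean}})\}_{l=1}^4$ equals $\{X(a_M,b_M), -X(a_M,b_M), Y(a_M,b_M), -Y(a_M,b_M)\}$ for some $a_M, b_M$ with $a_M \ne \pm b_M$. The shift is $L_M(\mathbf{v}_{\mathrm{mean}})$ because the total sum of the 24 entries is zero. The parameters $a_M, b_M$ are rational: they are themselves entries of $X(a_M,b_M)$, hence of the form $L_M(\mathbf{v}) - L_M(\mathbf{v}_{\mathrm{mean}})$. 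There are only finitely many ways to identify the 24 vectors $\mathbf{v}_{l,i}$ with the 24 symbolic slots $\pm\{a, b, -a-b, 2b-a, -2a-3b, 3a+b\}$ and $\pm\{a,b,-a-b,2a-b,-2b-3a,3b+a\}$; the number is bounded by $|S_4 \times (S_6)^4|$. By the Pigeonhole Principle, a single identification, together with the choice of $\tau \in S_4$, holds on an infinite set $\mathcal{M}_\infty \subseteq \mathcal{M}_1$.

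Fix this identification, and let $\mathbf{v}_a', \mathbf{v}_b'$ be the centered vectors assigned to the slots $a$ and $b$ of $V_{\tau(1)}'$. For every $M \in \mathcal{M}_\infty$ we then have $L_M(\mathbf{v}_a') = a_M$ and $L_M(\mathbf{v}_b') = b_M$. Set $\mathbf{a} := \mathbf{v}_a'$ and $\mathbf{b} := \mathbf{v}_b'$; these lie in $\QQ^m$ because $\mathbf{v}_{\mathrm{mean}} \in \QQ^m$. Take any other vector $\mathbf{v}_z'$ whose slot is the linear form $z = \alpha a + \beta b$ with $(\alpha,\beta) \in \ZZ^2$. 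For all $M \in \mathcal{M}_\infty$,
\begin{equation*}
L_M(\mathbf{v}_z' - \alpha\mathbf{a} - \beta\mathbf{b}) = 0 .
\end{equation*}
Clearing denominators, the left side is a polynomial in $M$ of degree at most $m-1$ with rational coefficients, and it vanishes at infinitely many points. So it is identically zero, which gives $\mathbf{v}_z' = \alpha\mathbf{a} + \beta\mathbf{b}$. Applying this to all 24 slots yields $V_{\tau(1)}' = X(\mathbf{a},\mathbf{b})$, $V_{\tau(2)}' = -X(\mathbf{a},\mathbf{b})$, $V_{\tau(3)}' = Y(\mathbf{a},\mathbf{b})$, and $V_{\tau(4)}' = -Y(\mathbf{a},\mathbf{b})$, all as vector identities.

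The main obstacle is making the slot identification well defined, since Family~1 has built-in symmetries. The relations $Y(a,b) = X(b,a)$ and $-X(a,b) = X(-a,-b)$ mean that the pair $(a_M,b_M)$ is determined only up to the $\ZZ_2\times\ZZ_2$ action, and also up to any relabeling in which two slots happen to take equal values for a particular $M$. I plan to handle this by pigeonholing over triples (labelling of the four multisets, choice of parameter pair from the symmetry orbit, bijection of entries to slots) rather than over bare permutations. For each $M$ we fix one valid witness triple; the set of triples is finite, so some triple occurs infinitely often. Accidental coincidences for particular $M$ then do no harm, because the witness on the infinite subsequence is a genuine simultaneous identity and the polynomial-vanishing argument applies to it directly.

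Finally, note that the condition $a_M \ne \pm b_M$ forces $\mathbf{a} \ne \pm\mathbf{b}$. I will not claim that $\mathbf{a}$ and $\mathbf{b}$ are linearly independent: the lemma only asserts that the 24 vectors are spanned by $\mathbf{a}, \mathbf{b}$, and the degenerate collinear case is allowed.
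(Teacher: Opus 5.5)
Your proposal is correct and follows essentially the same route as the paper. Both arguments pigeonhole a single slot identification over infinitely many $M \in \mathcal{M}_1$, take $\mathbf{a},\mathbf{b}$ to be the centered vectors in the $a$- and $b$-slots of $V_{\tau(1)}'$, and get each of the 24 vector identities from a polynomial in $M$ that vanishes at infinitely many points. Your extra care about the $\ZZ_2\times\ZZ_2$ parameter symmetry and about accidental coincidences (pigeonholing over witness identifications) makes explicit a point the paper's proof leaves implicit, but it does not change the argument.
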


\begin{proof}
	For each $M \in \mathcal{M}_1$, $\{L_M(V_1), \dots, L_M(V_4)\} = \{X(a_M, b_M), -X, Y, -Y\} + d_M$. Because the sum across all 24 elements is zero, $d_M = L_M(\mathbf{v}_{\mathrm{mean}})$ where $\mathbf{v}_{\mathrm{mean}} := \frac{1}{24} \sum_{l=1}^4 \sum_{i=1}^6 \mathbf{v}_{l, i}$. The scaling parameter $c_M$ is absorbed into the homogeneous linear parameters $(a_M, b_M)$.
	
	The identification space $S_4 \times (S_6)^4$ is finite. By the Pigeonhole Principle, a single permutation state holds on an infinite subset $\mathcal{M}_\infty \subseteq \mathcal{M}_1$. Setting $\mathbf{a} := \mathbf{v}_{\tau(1), \sigma(1)}' \in \QQ^m$ and $\mathbf{b} := \mathbf{v}_{\tau(1), \sigma(2)}' \in \QQ^m$, every other vector $\mathbf{v}'$ among the 24 centered vectors satisfies
	\begin{equation*}
		L_M(\mathbf{v}') = k_1 a_M + k_2 b_M = L_M(k_1 \mathbf{a} + k_2 \mathbf{b}),
	\end{equation*}
	where $(k_1, k_2) \in \ZZ^2$ are the integer coefficients from \eqref{eq:ewell_fam1}. Thus $L_M(\mathbf{v}' - (k_1 \mathbf{a} + k_2 \mathbf{b})) = 0$ for all $M \in \mathcal{M}_\infty$. Because a polynomial with infinitely many roots vanishes identically, $\mathbf{v}' = k_1 \mathbf{a} + k_2 \mathbf{b}$ holds identically in $\QQ^m$.
\end{proof}

By \cref{lem:family1_simultaneous_coplanarity}, clearing denominators so that $\mathbf{a}, \mathbf{b}$, the ambient vector dimension $m$ collapses into two positive variables:
\begin{equation*}
	u := \prod_{j=1}^m p_j^{a_j}, \qquad v := \prod_{j=1}^m p_j^{b_j}.
\end{equation*}
With the common positive prefactor $C_0 = \Pi^{1/6} > 0$, the multisets of parts of the candidate partitions $\lambda_X, \lambda_{-X}, \lambda_Y$ are given explicitly by
\begin{align*}
	\lambda_X &= C_0 \cdot \leftb u,\; v,\; \frac{1}{uv},\; \frac{v^2}{u},\; \frac{1}{u^2 v^3},\; u^3 v \rightb, \\
	\lambda_{-X} &= C_0 \cdot \leftb \frac{1}{u},\; \frac{1}{v},\; uv,\; \frac{u}{v^2},\; u^2 v^3,\; \frac{1}{u^3 v} \rightb, \\
	\lambda_Y &= C_0 \cdot \leftb u,\; v,\; \frac{1}{uv},\; \frac{u^2}{v},\; \frac{1}{u^3 v^2},\; u v^3 \rightb.
\end{align*}

Let $S_X(u, v) := u + v + \frac{1}{uv} + \frac{v^2}{u} + \frac{1}{u^2 v^3} + u^3 v$, and define $S_{-X}(u, v)$ and $S_Y(u, v)$ analogously as the unscaled sums of parts of $\lambda_{-X}$ and $\lambda_Y$. Dividing out by $C_0 > 0$, the equal-size requirement $|\lambda_X| = |\lambda_{-X}| = |\lambda_Y| = N$ yields
\begin{equation*}
	S_X(u, v) - S_{-X}(u, v) = 0 \quad \text{and} \quad S_X(u, v) - S_Y(u, v) = 0.
\end{equation*}
Clearing denominators by $u^3 v^3$, we define:
\begin{align*}
	E_1(u, v) :=& u^3 v^3 \left( S_X(u, v) - S_{-X}(u, v) \right) \\
	=&u^4 v^3 + u^3 v^4 + u^2 v^2 + u^2 v^5+ u+u^6v^4-u^2v^3-u^3v^2-u^4v^4-u^4v-u^5v^6-v^2, \\
	E_2(u, v) :=& u^3 v^3 \left( S_X(u, v) - S_Y(u, v) \right) \\
	=& u^2 v^5 - u^5 v^2 + u - v + u^6 v^4 - u^4 v^6.
\end{align*}
Because $E_2(u, v)$ is skew-symmetric under $u \leftrightarrow v$, $(u - v)$ divides $E_2(u, v)$. Factoring out $(u - v)$ yields the reduced polynomial:
\begin{equation*}
	E_2^*(u, v) := \frac{E_2(u, v)}{u - v} = u^5 v^4 + u^4 v^5 - u^4 v^2 - u^3 v^3 - u^2 v^4 + 1.
\end{equation*}
The condition $u \ne v$ eliminates identical partitions, while $uv \ne 1$ eliminates collapsed pairs.

\begin{proposition}
	\label{prop:fam1_no_triplets}
	Family~1 admits no partition triplets of equal size. That is, there exist no positive real numbers $(u, v) \in \RR_{>0}^2$ with $u \ne v$ and $uv \ne 1$ satisfying
	\begin{equation*}
		E_1(u, v) = 0 \quad \text{and} \quad E_2^*(u, v) = 0.
	\end{equation*}
\end{proposition}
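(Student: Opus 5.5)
The plan is to reduce the bivariate system to a univariate certification problem, exploiting the fact that $E_2^*$ is symmetric in $u \leftrightarrow v$. Introduce the elementary symmetric coordinates $s := u+v > 0$ and $p := uv > 0$. The constraint $u \ne v$ becomes $s^2 > 4p$, and $uv \ne 1$ becomes $p \ne 1$. Rewriting $E_2^*$ gives
\begin{equation*}
E_2^*(u,v) = p^4 s - p^2\,(s^2 - p) + 1 = -p^2 s^2 + p^4 s + p^3 + 1 .
\end{equation*}
So on the curve $E_2^* = 0$ the quantity $s$ is a root of the quadratic $p^2 s^2 - p^4 s - (p^3+1) = 0$. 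Because the product of its roots is $-(p^3+1)/p^2 < 0$, exactly one root is positive:
\begin{equation*}
s(p) = \frac{p^4 + \sqrt{p^8 + 4p^2(p^3+1)}}{2p^2}.
\end{equation*}
Thus the positive branch of $\{E_2^* = 0\}$ is parametrized by the single variable $p > 0$. On this branch, $u$ and $v$ are the two roots of $t^2 - s(p)\,t + p$, and they are real and distinct precisely when $s(p)^2 > 4p$.

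Next, decompose $E_1$, which is not symmetric, as $E_1 = S(s,p) + (u-v)\,T(s,p)$. Here $S = \tfrac12\bigl(E_1(u,v) + E_1(v,u)\bigr)$ is symmetric, and $T = \bigl(E_1(u,v) - E_1(v,u)\bigr)/\bigl(2(u-v)\bigr)$ is also symmetric. Both are computed explicitly as polynomials in $s$ and $p$ by the usual Newton/Waring reduction. Since $(u-v)^2 = s^2 - 4p$, any common solution must satisfy
\begin{equation*}
G(s,p) := S(s,p)^2 - (s^2 - 4p)\,T(s,p)^2 = 0 .
\end{equation*}
Taking $R(p) := \operatorname{Res}_s\!\bigl(G(s,p),\, p^2 s^2 - p^4 s - p^3 - 1\bigr) \in \ZZ[p]$ eliminates $s$. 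Every admissible solution $(u,v)$ therefore yields a positive root $p \ne 1$ of $R$.

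The remaining work, which I expect to be the main obstacle, is certifying $R$. The first step is to factor $R$ over $\QQ$ and strip off the factors that correspond to excluded configurations. These are powers of $(p-1)$, coming from $uv = 1$, together with any factors forced by $s^2 = 4p$, i.e.\ $u = v$; the latter are detected by also computing the resultant of $s^2 - 4p$ with the quadratic.

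For each remaining irreducible factor, I would isolate its positive real roots with Sturm sequences or Descartes' rule on rational intervals. The hope is that Descartes' rule of signs already shows some factors have no positive roots at all. For any surviving isolated root $p^*$, I would evaluate $s(p^*)$ by interval arithmetic. Then I would check one of two things rigorously: either $s(p^*)^2 < 4p^*$, so $u$ and $v$ are non-real, or the unsquared equation $S + (u-v)T = 0$ fails for both sign choices of $u - v$. Such a failure means the root of $G$ was introduced by squaring, and it is shown by certifying that $S$ and $T$ have signs incompatible with the equation on the isolating interval.

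If the resultant turns out to be too large for clean factorization, the fallback is the route the paper announces: run a CAD on $\{E_1 = 0,\ E_2^* = 0,\ u > 0,\ v > 0\}$, projecting onto $p$ or onto $u$. The sample-point checks in each cell then play the same role as the interval verifications above, and the cells lying on $u = v$ or $uv = 1$ are discarded.
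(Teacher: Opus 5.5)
Your reduction is correct and would go through, but it takes a different route from the paper. The paper certifies emptiness by a CAD on the $(u,v)$-system and supplements it with $\Res(E_1,E_2^*,v)$, following the one positive root $u\approx 1.6468$ of $\Psi$ to the partner $v\approx -1.0895<0$. You instead pass to $s=u+v$, $p=uv$, parametrize the positive branch of $E_2^*=0$ by $p$, and eliminate through $G=S^2-(s^2-4p)T^2=E_1(u,v)\,E_1(v,u)$ and $\Res_s$. Both versions leave the decisive step to machine certification. However, your symmetric rewriting $E_2^*=-p^2s^2+p^4s+p^3+1$ gives you much more than you use. The discriminant is a perfect square, $p^8+4p^5+4p^2=(p^4+2p)^2$, so $s(p)=p^2+1/p$ and $\{u,v\}=\{p^2,1/p\}$; equivalently
\[
E_2^*(u,v)=(u^2v+uv^2+1)(u^2v-1)(uv^2-1).
\]
The first factor is positive on $\RR_{>0}^2$, so there the curve $E_2^*=0$ is exactly $(u,v)=(t^2,t^{-1})$ or $(t^{-1},t^2)$ with $t>0$. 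These are the loci $\mathbf{a}=-2\mathbf{b}$ and $\mathbf{b}=-2\mathbf{a}$, on which $X(\mathbf{a},\mathbf{b})=Y(\mathbf{a},\mathbf{b})$ degenerates even though $u\ne v$. On both branches direct substitution gives
\[
E_1=t^{-2}\bigl(t^{10}-t^9+t^7-3t^6+3t^4-t^3+t-1\bigr)=t^{-2}(t-1)^3(t+1)\,\Omega_1(t),
\]
where $\Omega_1(t)=t^6+t^5+2t^4+3t^3+2t^2+t+1$ has positive coefficients. Hence $t=1$, i.e.\ $u=v=1$, is the only positive common zero. This is a short, computer-free proof. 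It makes your resultant $R(p)$, the spurious roots introduced by squaring, the Sturm/interval certification and the CAD fallback all unnecessary. Your alternative test $s(p^*)^2<4p^*$ could never even fire, since $s(p)^2-4p=(p^2-1/p)^2$. The factorization also accounts for the factors $\Omega_1(u)$ and $\Omega_2(u)=\Omega_1(\sqrt{u})\,\Omega_1(-\sqrt{u})$ in the paper's resultant: they come from these two degenerate branches.
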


\begin{proof}
	We seek all simultaneous solutions $(u, v) \in \RR_{>0}^2$ to the polynomial system and non-degeneracy inequalities:
	\begin{equation}
		\label{eq:fam1_reduced_system}
		E_1(u, v) = 0, \quad E_2^*(u, v) = 0, \quad u \ne v, \quad uv \ne 1.
	\end{equation}
	Applying the Cylindrical Algebraic Decomposition (CAD) decision procedure over the reals, quantifier elimination certifies that system \eqref{eq:fam1_reduced_system} admits no solutions in the positive quadrant $\RR_{>0}^2$; that is, its positive real solution set is strictly empty.
	
	To verify this algebraically and determine where the real solutions in $\RR^2$ actually lie, we compute the resultant of $E_1(u, v)$ and $E_2^*(u, v)$ with respect to $v$:
	\begin{equation*}
		R(u) := \Res(E_1(u, v), E_2^*(u, v), v) = (u - 1)^9 u^{12} (u + 1) (u^2 + u + 1)^2 \Omega_1(u) \Omega_2(u) \Psi(u),
	\end{equation*}
	where
	\begin{align*}
		\Omega_1(u) &= u^6 + u^5 + 2u^4 + 3u^3 + 2u^2 + u + 1, \\
		\Omega_2(u) &= u^6 + 3u^5 + 2u^4 - u^3 + 2u^2 + 3u + 1, \\
		\Psi(u) &= u^9 - u^7 - 3u^6 + u^5 + 2u^4 - 4u^3 - u^2 - u - 1.
	\end{align*}
	The polynomial factors $\Omega_1(u)$ and $\Omega_2(u)$ have no positive real roots. The factor $\Psi(u)$ admits a unique positive real root at $u \approx 1.6468$. 
	
	However, back-substituting $u \approx 1.6468$ into the system $E_1(u, v) = E_2^*(u, v) = 0$ yields a unique real solution for $v$ given by
	\begin{equation*}
		v \approx -1.0895.
	\end{equation*}
	While this produces a real solution $(u, v) \approx (1.6468, -1.0895) \in \RR^2$, it strictly violates the positivity condition $v > 0$ required by integer partitions.
	
	Consequently, the system admits no solutions in the positive quadrant $\RR_{>0}^2$ satisfying $u \ne v$ and $uv \ne 1$. This proves that Family~1 admits no partition triplets.
\end{proof}

\subsection{Proof of \cref{thm:main_s3_fibers} and the Exclusion of Quartets}

\begin{proof}[Proof of \cref{thm:main_s3_fibers}]
	Let $N \ge 1$ be an integer. Suppose for contradiction that there exist three distinct partitions $\lambda_1, \lambda_2, \lambda_3 \in \Part_6(N)$ satisfying $\pre_3(\lambda_1) = \pre_3(\lambda_2) = \pre_3(\lambda_3)$. 
	
	By \cref{prop:triplet_rigidity}, these partitions must form a triplet embedded in an Ewell quartet. The linear projection dichotomy partitions the parameter space into $\NN_{\ge M_0} = \mathcal{M}_1 \cup \mathcal{M}_2$.
	If $\mathcal{M}_2$ is infinite, \cref{prop:fam2_no_triplets} proves that neither Type~A nor Type~B can exist.
	If $\mathcal{M}_1$ is infinite, \cref{prop:fam1_no_triplets} proves that no triplet configuration can exist.
	
	This contradiction establishes that no three distinct partitions in $\Part_6(N)$ can share the same elementary symmetric partition under $\pre_3$. Thus, every fiber of $\pre_3$ on $\Part_6(N)$ has cardinality at most $2$.
\end{proof}

\section{Discussion and Open Problems}
\label{sec:conjectures}

While the Moser root sets $\mathcal{Z}_s$ are completely classified for $s \le 5$ \cite{selfridge1958determination,amdeberhan1997injectivity}, obtaining exact classifications for $s \ge 6$ remains an open challenge in Diophantine analysis. For $s \ge 6$, the exact classification of the integer zero set of the Moser polynomials remains incomplete, and no general effective bound on the relevant indices $k$ is currently known, mathematical knowledge is currently confined to extensive computer searches (such as Fomin's computational census up to $s = 200$ \cite{fomin2019multiset}), while the conjecture that integer roots require $k \le 2s - 1$ remains unproved. 

For this reason, non-mirror singular lengths can currently be analyzed rigorously only for $s \le 5$. Having settled $s = 5$ in \cref{thm:main_unconditional}(i) and $s = 4$ in \cref{thm:main_s4}, the only remaining classically established roots requiring examination are $n \in \{27, 486\}$ for $s = 3$.

\subsection{Verification of Known Literature Constructions at $n \in \{27, 486\}$}

By \cref{thm:main_unconditional}(iii), the only lengths strictly greater than $2s = 6$ at which $\pre_3$ can theoretically admit non-injective partitions are $n \in \{27, 486\}$, corresponding to the remaining integer roots of $F_{3,k}(n) = 0$ classified by Selfridge and Straus \cite{selfridge1958determination}. In the classical additive multiset setting, all non-injective constructions discovered in the literature \cite{fomin1994sets,boman1996examples} consist of mirror pairs $A \eqs{3} -A$ supported on arithmetic progressions centered at zero ($\sigma_1 = 0$), comprising four canonical configurations:
\begin{enumerate}[label=\textup{(\roman*)},leftmargin=2em]
	\item \textbf{Family 1 ($n = 27$, 3-point support)}: $X_1 = \leftb -4^1, -1^{16}, 2^{10} \rightb$;
	\item \textbf{Family 2 ($n = 27$, 4-point support)}: $X_2 = \leftb -4^5, -1^{10}, 2^{10}, 5^2 \rightb$;
	\item \textbf{Family 3 ($n = 27$, 5-point support)}: $X_3 = \leftb -7^1, -4^5, -1^{10}, 2^6, 5^5 \rightb$;
	\item \textbf{Family 4 ($n = 486$, 5-point support)}: $X_4 = \leftb -7^1, -4^{56}, -1^{231}, 2^{176}, 5^{22} \rightb$.
\end{enumerate}

Suppose a candidate partition pair $(\lambda, \mu)$ is constructed from an arbitrary finite set of primes $\{p_1, \dots, p_m\}$ whose exponent vector multisets in $\ZZ_{\ge 0}^m$ project under $L_M$ into one of these four families for infinitely many $M \in \NN$. Because each of the canonical sets $X_k$ has fixed relative integer coordinates with no internal projective degrees of freedom, the linear projection and cross-multiplication arguments of \cref{lem:collinearity} and \cref{lem:family2_simultaneous_collinearity} apply verbatim: any such multidimensional exponent vector multiset is strictly collinear. 

Consequently, even in an arbitrary multi-prime setting, any partition pair modeled on these historical configurations must be generated by a single rational base $r = \prod_{j=1}^m p_j^{w_j} \in \QQ_{>0}$ and an integer affine scaling and shift $x \mapsto c x + d$ ($c \in \ZZ \setminus \{0\}, d \in \ZZ$). 

The parts of the corresponding partitions are of the form:
\begin{equation*}
	\lambda_i = C_0 \cdot r^{c x_i + d} = (C_0 r^d) \cdot (r^c)^{x_i} = C_0' \cdot R^{x_i},
\end{equation*}
where $C_0' := C_0 r^d > 0$ and $R := r^c \in \QQ_{>0}$. Notice that the translation $d$ is absorbed into the common prefactor, and the scaling $c$ is absorbed into the base $R$. Thus, the equal-size constraint $|\lambda| = |\mu| = N$ for the entire infinite affine family $\{c X_k + d\}_{c, d}$ reduces to the polynomial equation:
\begin{equation*}
	P_k(R) := \sum_{x \in X_k} R^x - \sum_{x \in X_k} R^{-x} = 0 \qquad (k = 1, 2, 3, 4).
\end{equation*}

Multiplying by $R^{-\min X_k}$ to clear negative exponents, we factor out $(R - 1)$ to define the quotient polynomial $Q_k(R) := P_k(R)/(R - 1)$. For all four canonical literature families, the polynomials $P_k(R)$ and $Q_k(R)$ factor as follows:
\begin{enumerate}[label=\textup{(\roman*)},leftmargin=2em]
	\item \textbf{Family 1 ($n = 27$, 3-point support: $X_1 = \leftb -4^1, -1^{16}, 2^{10} \rightb$)}:
	\begin{align*}
		P_1(R) &= 1 - 10R^2 + 16R^3 - 16R^5 + 10R^6 - R^8 = -(R - 1)^5 (R + 1) (R^2 + 4R + 1), \\
		Q_1(R) &= -(R - 1)^4 (R + 1) (R^2 + 4R + 1).
	\end{align*}
	
	\item \textbf{Family 2 ($n = 27$, 4-point support: $X_2 = \leftb -4^5, -1^{10}, 2^{10}, 5^2 \rightb$)}:
	\begin{align*}
		P_2(R) &= 5 - 2R^{-1} - 10R^2 + 10R^3 - 10R^5 + 10R^6 - 5R^8 + 2R^9 \notag \\
		&= R^{-1} (R - 1)^5 (R + 1)^3 (2R^2 - R + 2), \\
		Q_2(R) &= R^{-1} (R - 1)^4 (R + 1)^3 (2R^2 - R + 2).
	\end{align*}

	\item \textbf{Family 3 ($n = 27$, 5-point support: $X_3 = \leftb -7^1, -4^5, -1^{10}, 2^6, 5^5 \rightb$)}:
	\begin{align*}
		P_3(R) &= 1 - 5R^2 + 5R^3 - 6R^5 + 10R^6 - 10R^8 + 6R^9 - 5R^{11} + 5R^{12} - R^{14} \notag \\
		&= -(R - 1)^5 (R + 1) (R^8 + 4R^7 + 6R^6 + 9R^5 + 11R^4 + 9R^3 + 6R^2 + 4R + 1), \\
		Q_3(R) &= -(R - 1)^4 (R + 1) (R^8 + 4R^7 + 6R^6 + 9R^5 + 11R^4 + 9R^3 + 6R^2 + 4R + 1).
	\end{align*}

	\item \textbf{Family 4 ($n = 486$, 5-point support: $X_4 = \leftb -7^1, -4^{56}, -1^{231}, 2^{176}, 5^{22} \rightb$)}:
	\begin{align*}
		P_4(R) &= 1 - 22R^2 + 56R^3 - 176R^5 + 231R^6 - 231R^8 + 176R^9 - 56R^{11} + 22R^{12} - R^{14} \notag \\
		&= -(R - 1)^9 (R + 1) (R^4 + 8R^3 + 15R^2 + 8R + 1), \\
		Q_4(R) &= -(R - 1)^8 (R + 1) (R^4 + 8R^3 + 15R^2 + 8R + 1).
	\end{align*}
\end{enumerate}

In all four cases, the unique positive rational root is the trivial root $R = 1$, which forces $\lambda = \mu = (N/n, \dots, N/n)$. Therefore, every known historical counterexample family from the additive multiset literature fails to produce non-injective partitions of equal size. 

We note the precise boundary of this deduction: while this analysis rigorously eliminates all multi-prime and affine configurations modeled on the known literature families, establishing that no unknown, non-arithmetic-progression counterexamples exist at $n \in \{27, 486\}$ remains an open challenge.

\subsection{Refined Conjectures and Remaining Open Problems}

The results established in this paper settle several previously open cases regarding the injectivity of elementary symmetric partitions on $\Part_n(N)$:
\begin{itemize}[leftmargin=2em]
	\item \textbf{Outside Moser Root Sets}: For every $s \ge 2$, $\pre_s$ is injective on partitions of length $n$ for all $n \notin \mathcal{Z}_s$.
	\item \textbf{The Case $s = 4$}: $n = 12$ is excluded on partitions of equal size. Thus, on $\Part_n(N)$, $\pre_4$ is injective for all $n \ne 8$.
	\item \textbf{The Case $s = 5$}: Since $\mathcal{Z}_5 = \{10\}$, $\pre_5$ on $\Part_n(N)$ is injective for all $n \ne 10$.
	\item \textbf{The Mirror Bound for $s = 3$ at $n = 6$}: Every fiber of $\pre_3$ on $\Part_6(N)$ has cardinality at most $2$.
\end{itemize}

Having accounted for these settled cases, we formulate the exact open problems that remain:

\begin{conjecture}
	\label{conj:s3_remaining}
	For every positive integer $N$, the elementary symmetric partition map $\pre_3$ is injective on $\Part_{27}(N)$ and on $\Part_{486}(N)$.
\end{conjecture}

\begin{conjecture}
	\label{conj:higher_mirror_fibers}
	For every integer $s \ge 4$ and $N \ge 1$, every fiber of $\pre_s$ on $\Part_{2s}(N)$ has cardinality at most $2$. That is, if $\lambda, \mu \in \Part_{2s}(N)$ satisfy $\pre_s(\lambda) = \pre_s(\mu)$ with $\lambda \ne \mu$, then $\mu = \flip_s(\lambda)$, and no third partition shares this image.
\end{conjecture}

\begin{conjecture}
	\label{conj:general_non_mirror}
	Let $s \ge 6$. For every non-mirror integer root $n \in \mathcal{Z}_s \setminus \{2s\}$ and every $N \ge 1$, the elementary symmetric partition map $\pre_s$ is injective on $\Part_n(N)$.
\end{conjecture}

\appendix
\section{Symbolic Computation Scripts}
\label{sec:appendix_code}

In this appendix, we provide the complete, self-contained Wolfram Mathematica script used to certify all computer-assisted algebraic exclusions and polynomial factorizations across \cref{sec:s3_quartets} and \cref{sec:conjectures}. The code performs three exact procedures:
\begin{enumerate}[label=\textup{(\roman*)}]
	\item \textbf{Section~\ref{sec:s3_quartets} Family~2 (\cref{prop:fam2_no_triplets})}: Computes the polynomial greatest common divisors $\gcd(P_1, P_2) = (r - 1)^3 (r + 1)$ and $\gcd(P_2, Q_1) = (r - 1)^3 (r + 1)$, certifying that no common roots exist in $(1, \infty)$ via the Euclidean algorithm and Cylindrical Algebraic Decomposition (CAD).
	
	\item \textbf{Section~\ref{sec:s3_quartets} Family~1 (\cref{prop:fam1_no_triplets})}: Certifies the emptiness of the semialgebraic system $\Phi$ via CAD in \texttt{Resolve}, factors the resultant $\Res(E_1, E_2^*, v)$, extracts the unique positive root $u \approx 1.6468$ of $\Psi(u)$, and back-substitutes it to confirm that the unique real solution is $v \approx -1.0895 < 0$.
	
	\item \textbf{Section~\ref{sec:conjectures} Historical Families ($n \in \{27, 486\}$)}: Generates the size-difference polynomials $P_k(r)$ and quotients $Q_k(r) = P_k(r)/(r - 1)$ for the four canonical counterexample families from \cite{fomin1994sets,boman1996examples}, certifying that $r = 1$ is their unique positive real root.
\end{enumerate}

\begin{verbatim}
	Software Used: Wolfram Mathematica 12.0
	(* ========================================================================= *)
	(* COMPLETE CERTIFICATION SCRIPT FOR SECTIONS 4 AND 5                        *)
	(* ========================================================================= *)
	
	ClearAll["Global`*"];
	
	Print["================================================================="];
	Print[">>> 1. VERIFYING SECTION 4: FAMILY 2 (ONE-PARAMETER FAMILY) <<<"];
	Print["================================================================="];
	
	set3X = {-28, -4, -1, 2, 11, 20};
	set3Y = {-16, -10, -7, -4, 8, 29};
	
	(* Shift exponents by +28 for X and +29 for Y *)
	P1 = Sum[r^(x + 28), {x, set3X}] - Sum[r^(-x + 28), {x, set3X}] // Expand;
	P2 = Sum[r^(x + 28), {x, set3X}] - Sum[r^(y + 28), {y, set3Y}] // Expand;
	Q1 = Sum[r^(y + 29), {y, set3Y}] - Sum[r^(-y + 29), {y, set3Y}] // Expand;
	
	Print["Polynomial P1(r) = ", P1];
	Print["Polynomial P2(r) = ", P2];
	Print["Polynomial Q1(r) = ", Q1];
	
	(* Type A: {X, -X, Y} *)
	gcdA = PolynomialGCD[P1, P2];
	Print["Type A GCD(P1, P2) = ", Factor[gcdA]];
	cadA = Resolve[Exists[r, r > 1 && P1 == 0 && P2 == 0], Reals];
	Print["Type A CAD Result on (1, Infinity): ", cadA];
	
	(* Type B: {X, Y, -Y} *)
	gcdB = PolynomialGCD[P2, Q1];
	Print["Type B GCD(P2, Q1) = ", Factor[gcdB]];
	cadB = Resolve[Exists[r, r > 1 && P2 == 0 && Q1 == 0], Reals];
	Print["Type B CAD Result on (1, Infinity): ", cadB];
	
	
	Print["\n================================================================="];
	Print[">>> 2. VERIFYING SECTION 4: FAMILY 1 (TWO-PARAMETER FAMILY) <<<"];
	Print["================================================================="];
	
	termsX = {u, v, 1/(u*v), v^2/u, 1/(u^2*v^3), u^3*v};
	termsNegX = 1/termsX;
	termsY = {u, v, 1/(u*v), u^2/v, 1/(u^3*v^2), u*v^3};
	
	E1 = Numerator[Together[Total[termsX] - Total[termsNegX]]];
	E2 = Numerator[Together[Total[termsX] - Total[termsY]]];
	E2star = Factor[E2] / (u - v) // Simplify;
	
	semialgebraicConditions = 
	u > 0 && v > 0 && (u != v) && (u*v != 1) && (E1 == 0) && (E2star == 0);
	
	Print["[CAD Decision Procedure] Checking semialgebraic system..."];
	cadResultFamily1 = Resolve[Exists[{u, v}, semialgebraicConditions], Reals];
	Print["--> CAD Evaluation on Positive Quadrant: ", cadResultFamily1];
	
	Print["\n[Resultant Elimination] Computing Resultant[E1, E2*, v]..."];
	resV = Resultant[E1, E2star, v];
	Print["--> Irreducible factorization of Res(E1, E2*, v): ", Factor[resV]];
	
	(* Back-substituting the unique positive real root of Psi(u) *)
	psiU = u^9 - u^7 - 3*u^6 + u^5 + 2*u^4 - 4*u^3 - u^2 - u - 1;
	uExact = u /. Solve[{psiU == 0, u > 0}, u][[1]];
	Print["--> Unique positive root of Psi(u): u ≈ ", N[uExact, 6]];
	
	commonV = v /. Solve[{(E1 /. u -> uExact) == 0, (E2star /. u -> uExact) == 0}, v][[1]];
	Print["--> Corresponding unique real solution: v ≈ ", N[commonV, 6]];
	Print["--> Does v satisfy the positivity condition (v > 0)? : ", commonV > 0];
	
	
	Print["\n================================================================="];
	Print[">>> 3. VERIFYING SECTION 5: HISTORICAL FAMILIES (n = 27, 486) <<<"];
	Print["================================================================="];
	
	X1 = {-4 -> 1, -1 -> 16, 2 -> 10};
	X2 = {-4 -> 5, -1 -> 10, 2 -> 10, 5 -> 2};
	X3 = {-7 -> 1, -4 -> 5, -1 -> 10, 2 -> 6, 5 -> 5};
	X4 = {-7 -> 1, -4 -> 56, -1 -> 231, 2 -> 176, 5 -> 22};
	
	histFamilies = {
		{"Family 1 (n=27, 3-pt)", X1},
		{"Family 2 (n=27, 4-pt)", X2},
		{"Family 3 (n=27, 5-pt)", X3},
		{"Family 4 (n=486, 5-pt)", X4}
	};
	
	Do[
	name = fam[[1]];
	setX = fam[[2]];
	shift = -Min[Keys[setX]];
	
	polyP = Sum[pt[[2]] * r^(pt[[1]] + shift), {pt, setX}] - 
	Sum[pt[[2]] * r^(-pt[[1]] + shift), {pt, setX}] // Expand;
	polyQ = Simplify[polyP / (r - 1)];
	
	Print["--- ", name, " ---"];
	Print["P(r) factored: ", Factor[polyP]];
	Print["Q(r) factored: ", Factor[polyQ]];
	Print["Roots of Q(r) on (0, Infinity): ", Solve[{polyQ == 0, r > 0}, r, Reals]];
	, {fam, histFamilies}];
	
	Print["\n================================================================="];
	Print[">>> COMPUTATIONAL CERTIFICATION COMPLETE <<<"];
	Print["================================================================="];
\end{verbatim}


\end{document}